\newtheorem{theorem}{Theorem}[section]
\newtheorem{proposition}[theorem]{Proposition}
\newtheorem{lemma}[theorem]{Lemma}
\newtheorem{corollary}[theorem]{Corollary}
\newtheorem{question}[theorem]{Question}
\newcommand{\F}{\mathbb{F}}
\newcommand{\Z}{\mathbb{Z}}
\begin{document}
\title{A few more Hadamard\\
	Partitioned Difference Families}

\author{Anamari Naki\'c \thanks{Faculty of Electrical Engineering and Computing, University of Zagreb, Croatia, email: anamari.nakic@fer.hr}}

\maketitle

\begin{abstract}
A $(G,[k_1,\dots,k_t],\lambda)$ {\it partitioned difference family} (PDF) is a partition $\cal B$ of an additive group $G$ into sets 
({\it blocks}) of sizes $k_1$, \dots, $k_t$, such that the list of differences of ${\cal B}$ covers exactly $\lambda$ times every 
non-zero element of $G$. It is called {\it Hadamard} (HPDF) if the order of $G$ is $2\lambda$. 
The study of HPDFs is motivated by the fact that each of them gives rise, recursively,
to infinitely many other PDFs. Apart from the {\it elementary} HPDFs consisting of a Hadamard difference set and its complement, only
one HPDF was known.
In this article we present three new examples in several groups and we start a general investigation on the possible existence of HPDFs
with assigned parameters by means of simple arguments.
\end{abstract}

\textbf{Keywords:} partitioned difference family, difference set, partial difference set

\section{Introduction}

We recall that  the list of differences of a subset $B$ of an additive group $G$, denoted by $\Delta B$, is the multiset
of all possible differences between two distinct elements of $B$:
$$
\Delta B: = \{ x-y \, : \, x \neq y, x,y \in B \}.
$$
More generally, the list of differences of a collection ${\cal F}=\{B_1, \ldots, B_t \}$ of subsets of $G$ 
is the multiset $\Delta{\cal F}$ which is union of the lists of differences of all the $B_i$'s, i.e., $\Delta{\cal F}=\bigcup_{i=1}^t\Delta B_i$.
The collection $\mathcal{F}$ is said to be a {\it difference family} (DF) if $\Delta{\cal F}$ covers 
every non-zero element of $G$ a constant number $\lambda$ of times. 
In this case, if $K$ is the multiset of the sizes of the $B_i$'s, one says that $\cal F$ is a $(G,K,\lambda)$-DF. 
More specifically, one writes $(G,[k_1,\dots,k_t],\lambda)$-DF if $B_i$ has size $k_i$. 
The $B_i$'s are the {\it base blocks} of $\cal F$ and $\lambda$ is its {\it index}.
One often speaks of a $(v,K,\lambda)$-DF or $(v,[k_1,\dots,k_t],\lambda)$-DF when the group $G$ (of order $v$) is understood.

If a DF of index $\lambda$ has only one block $B$ and its size is $k$, then one says that $B$ is a $(G,k,\lambda)$ {\it difference set} (DS).
As before, one generally speaks of a $(v,k,\lambda)$-DS if the group $G$ is understood.
Among the many classes of difference sets we have, in particular, the so called {\it Hadamard difference sets} 
which are those whose parameters are $(4u^2, 2u^2 - u, u^2 - u)$ for some $u$.

A $(G,[k_1,\dots,k_t],\lambda)$-DF is {\it  partitioned} (PDF) if its blocks partition $G$. 
Here the parameters $v=o(G)$ and $\lambda$ are completely determined by $K$.
Indeed it is evident that the equalities $k_1+\dots+ k_t=v$ and $k_1(k_1-1)+...+k_t(k_t-1)=\lambda(v-1)$ hold.
The notion of a PDF was introduced in \cite{DY05} for the construction of {\it optimal constant composition codes} \cite{LFHC03,DY05} 
which are important in various applications (see, e.g,. \cite{CCD04,GFM06})
and it is equivalent to that of a {\it zero difference balanced function} 
\cite{D} even though, according to \cite{BJ}, it is far preferable to keep the standard notation and terminology.

In a very recent article \cite{BJ2}, PDFs have been used to prove the existence of infinitely many {\it resolvable linear spaces}
with a mandatory set of block sizes and an automorphism group acting sharply transitively on the points and 
transitively on the parallel classes.

If $B$ is $(v,k,\lambda)$-DS in $G$, then $\{B,G\setminus B\}$ is a $(v,[k,v-k],v-2k+2\lambda)$-PDF.
So, in particular, a Hadamard $(4u^2, 2u^2 - u, u^2 - u)$-DS gives rise to a
$(4u^2, [2u^2+u, 2u^2 - u], 2u^2 )$-PDF.  Note that here the order of the group is twice the index.  
For this reason all PDFs with this property have been called {\it Hadamard partition difference families} (HPDFs) in 
the paper were they have been introduced \cite{Bur19}.
The motivation of their study is that each HPDF in a group $G$ gives rise to an infinite class of PDFs in 
suitable supergroups of $G$. 

As pointed out in \cite{BJ}, PDFs with (many) blocks of size $1$ are not considered very interesting. 
However, in the case of HPDFs, the situation is quite different since the PDFs arising from them by 
using Buratti's construction no longer contain blocks of size 1. 

Apart from the HPDFs arising from Hadamard DSs that we call {\it elementary}, there was only one known 
example of a HPDF. This makes believe that HPDFs are quite rare. In this paper we
determine three new examples in several groups.

Apart from this,
the article wants to be the beginning of a general investigation on HPDFs by means of quite simple arguments. It is structured as follows.

In the next section we characterize the PDFs having exactly two blocks. They
necessarily consist of a difference set and its complement. In particular, each HPDF with two blocks
necessarily consists of a Hadamard difference set and its complement.

In Section 3 we give the trivial necessary conditions for the existence of a HPDF, we list the first admissible
parameter sets and then we give some informations on the HPDFs with exactly three blocks. 

In Section 4, exploiting {\it partial difference sets}, we prove that a HPDF with three block sizes one of
which is 1 cannot exist.

In Section 5 we give one more necessary condition for the existence of a HPDF in a group
having at least one subgroup of index 2.

In Section 6 we present the three new examples mentioned earlier. We have a $(24,[1^3,2^2,17],12)$-HPDF in three
pairwise non-isomorphic groups, a $(36,[3,9,24],18)$-HPDF in nine pairwise non-isomorphic groups, and 
a cyclic $(40,[1,3,9,27],20)$-HPDF.

In Section 7 we determine the infinite families of {\it descendants} of our new three examples.

Finally, in Section 8 we pose two open questions. We first ask whether there exists a 
$(v,k,\lambda)$-DS with $v=2\lambda$ and $k>4$. The motivation is that 
such a DS would give a $(v,[1^{v-k},k],\lambda)$-HPDF.
The second question is whether, given positive integers $q$ and $n$, there exists a 
PDF whose related $K$ is $[q^0,q^1,q^2,q^3,\dots,q^{2n-1}]$.
The motivation is that such a PDF with $q=3$ would be Hadamard.

\section{PDFs and HPDFs with two blocks}

The following fact is extremely well-known (see, e.g., \cite{And}).
\begin{proposition}\label{complementary}
	If $B$ is a $(v,k,\lambda)$-DS in $G$, then the complement $\overline B$ of $B$ in $G$ is
	a $(v,v-k,v-2k+\lambda)$-DS.
\end{proposition}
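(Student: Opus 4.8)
The plan is to argue by a direct double counting of differences. Fix an arbitrary nonzero $g\in G$ and count the ordered pairs $(x,y)\in G\times G$ with $x-y=g$ in two ways. On the one hand there are exactly $v$ such pairs, since $y$ ranges freely over $G$ and then $x=y+g$ is forced (and automatically $x\neq y$ because $g\neq 0$). On the other hand I would split these $v$ pairs into four types according to whether each of $x$ and $y$ lies in $B$ or in $\overline B$: let $a$ be the number of pairs with $x,y\in B$, let $b$ (resp.\ $c$) be the number with $x\in B,\ y\in\overline B$ (resp.\ $x\in\overline B,\ y\in B$), and let $d$ be the number with $x,y\in\overline B$, so that $a+b+c+d=v$.

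The key observations are three. First, $a=\lambda$, since $a$ is exactly the number of times $g$ occurs in $\Delta B$ and $B$ is a $(v,k,\lambda)$-DS. Second, counting the pairs with $x\in B$ alone gives $|B|=k$ of them (for each $x\in B$, $y=x-g$ is forced), hence $a+b=k$ and so $b=k-\lambda$. Third, the symmetric count over $y\in B$ gives $a+c=k$, hence $c=k-\lambda$ as well. Substituting into $a+b+c+d=v$ yields
$$
d=v-\lambda-2(k-\lambda)=v-2k+\lambda .
$$
Since $d$ is precisely the number of times $g$ occurs in $\Delta\overline B$ and the computation never used anything about $g$ beyond $g\neq 0$, the list $\Delta\overline B$ covers every nonzero element of $G$ exactly $v-2k+\lambda$ times; as $|\overline B|=v-k$, this is exactly the assertion that $\overline B$ is a $(v,v-k,v-2k+\lambda)$-DS.

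I do not anticipate a genuine obstacle: the argument is elementary, and the only points needing care are the clean bookkeeping of the four pair-types and the remark that each of the four counts is the same for every nonzero $g$. (As a sanity check, the derived index $v-2k+\lambda$ is automatically non-negative, being a count of ordered pairs, and $(v-k)-(v-2k+\lambda)=k-\lambda$, so the order $n=k-\lambda$ of the difference set is preserved under complementation.) An equivalent, even shorter route that I would mention in passing is to work in the integer group ring: writing $B$ also for $\sum_{x\in B}x$, $\overline B=G-B$ with $G=\sum_{x\in G}x$, and using $GG^{(-1)}=vG$, $GB^{(-1)}=BG^{(-1)}=kG$ together with $BB^{(-1)}=(k-\lambda)\cdot 1+\lambda G$, one expands $\overline B\,\overline B^{(-1)}=(G-B)(G-B^{(-1)})=(k-\lambda)\cdot 1+(v-2k+\lambda)G$, which gives the same conclusion at once.
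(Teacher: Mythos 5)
Your proof is correct, and it is essentially the same argument the paper uses: the paper cites Proposition \ref{complementary} as well-known without proof, but its proof of the generalization in Proposition \ref{anderson} is exactly your four-way partition of the $v$ ordered pairs $(x,y)$ with $x-y=g$ according to membership of $x$ and $y$ in $B$ or $\overline B$, with the same counts $\lambda$, $k-\lambda$, $k-\lambda$, and $v-2k+\lambda$. Your group-ring remark is a standard equivalent reformulation and adds nothing beyond the counting argument.
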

The above explains why, as said in the introduction, a $(v,k,\lambda)$-DS and its complement give a $(v,[k,v-k],v-2k+2\lambda)$-PDF.

Although the following is a rather straightforward generalization of Proposition \ref{complementary},
we are not aware whether it was ever stated explicitly before.
\begin{proposition}\label{anderson}
	Let $B$ be a $k$-subset of an additive group $G$ of order $v$ and let $\overline B$ be the complement of $B$ in $G$.
	Let  $g$ be a non-zero element of $G$ and let $\lambda(g)$ be its multiplicity in $\Delta B$.
	Then the multiplicity $\overline{\lambda}(g)$ of $g$ in $\Delta\overline B$ is $v-2k+\lambda(g)$.
\end{proposition}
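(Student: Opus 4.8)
The plan is to count, for a fixed non-zero $g \in G$, the number of ordered pairs $(x,y)$ with $x,y \in G$, $x - y = g$, and to split this count according to where $x$ and $y$ fall relative to the partition $G = B \cup \overline B$. Since $g \neq 0$, the map $x \mapsto x - g$ is a fixed-point-free bijection of $G$, so there are exactly $v$ such ordered pairs in total. Each such pair falls into exactly one of four classes depending on whether each of $x,y$ lies in $B$ or in $\overline B$; write $n_{BB}$, $n_{B\overline B}$, $n_{\overline B B}$, $n_{\overline B\overline B}$ for the respective counts. By definition $n_{BB} = \lambda(g)$ and $n_{\overline B\overline B} = \overline\lambda(g)$, and the total gives
\begin{equation}
\lambda(g) + n_{B\overline B} + n_{\overline B B} + \overline\lambda(g) = v.
\end{equation}

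Next I would evaluate the two ``mixed'' counts. The number of ordered pairs $(x,y)$ with $x - y = g$ and $x \in B$ is exactly $|B| = k$, because $y$ is forced to be $x - g$ once $x$ is chosen in $B$; this count splits as $n_{BB} + n_{B\overline B}$ according to whether the forced partner $y = x-g$ lies in $B$ or in $\overline B$, so $n_{B\overline B} = k - \lambda(g)$. Symmetrically, the number of ordered pairs $(x,y)$ with $x-y = g$ and $y \in B$ is $|B| = k$, splitting as $n_{BB} + n_{\overline B B}$, so $n_{\overline B B} = k - \lambda(g)$. Substituting both into the total count yields
\begin{equation}
\lambda(g) + \bigl(k - \lambda(g)\bigr) + \bigl(k - \lambda(g)\bigr) + \overline\lambda(g) = v,
\end{equation}
and solving gives $\overline\lambda(g) = v - 2k + \lambda(g)$, as claimed.

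There is no real obstacle here; the only thing to be careful about is the distinction between ordered and unordered pairs. The multiset $\Delta B$ is defined via ordered pairs of distinct elements (each unordered pair $\{x,y\}$ with $x \neq y$ contributes both $x-y$ and $y-x$), and since $g \neq 0$ the two mixed classes are genuinely distinct, so no pair is double-counted and the bookkeeping above is consistent. I would also note in passing that Proposition \ref{complementary} is the special case obtained by summing this identity over a set $B$ that happens to be a difference set, i.e.\ when $\lambda(g) = \lambda$ is independent of $g$; this explains the remark preceding the statement that Proposition \ref{anderson} is a straightforward generalization of Proposition \ref{complementary}.
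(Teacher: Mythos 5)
Your proposal is correct and follows essentially the same route as the paper: both count the $v$ ordered pairs $(x,y)$ with $x-y=g$, partition them according to the four cells $B\times B$, $B\times\overline B$, $\overline B\times B$, $\overline B\times\overline B$, and evaluate the two mixed counts as $k-\lambda(g)$ each. Your derivation of the mixed counts (fixing $x\in B$ forces $y=x-g$) is a slightly slicker phrasing of the paper's explicit construction of the sets $B'$ and $B''$, but the argument is the same.
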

\begin{proof}
	Given subsets $X$ and $Y$ of $G$, set $$\Lambda_{X\times Y}(g)=\{(x,y)\in X\times Y \ | \ x-y=g\}.$$
	We have $|\Lambda_{B\times B}(g)|=\lambda$ and $|\Lambda_{\overline{B}\times \overline{B}}(g)|=\overline{\lambda}$
	by assumption.
	Let $(x_1,y_1)$, \dots, $(x_\lambda,y_\lambda)$ be the $\lambda$ pairs of $\Lambda_{B\times B}(g)$ and set
	$$B'=B\setminus\{x_1,\dots,x_\lambda\}\quad{\rm and}\quad B''=B\setminus\{y_1,\dots,y_\lambda\}.$$
	It is easy to see that 
	$$\Lambda_{{B}\times \overline{B}}(g)=\{(b,-g+b) \ | \ b\in B'\}\quad{\rm and}\quad \Lambda_{\overline{B}\times {B}}(g)=\{(g+b,b) \ | \ b\in B''\}$$
	so that $|\Lambda_{{B}\times \overline{B}}(g)|=|B'|=k-\lambda$ and $|\Lambda_{\overline{B}\times {B}}(g)|=|B''|=k-\lambda$.
	Finally, it is evident that  $\Lambda_{G\times G}(g)=\{(g+y,y) \ | \ y\in G\}$ so that $|\Lambda_{G\times G}(g)|=|G|=v$.
	Considering that the four sets $B\times B$,  $\overline{B}\times \overline{B}$,  ${B}\times \overline{B}$, and  $\overline{B}\times {B}$
	partition $G\times G$, we can write 
	$$|\Lambda_{{B}\times {B}}(g)|+|\Lambda_{\overline{B}\times \overline{B}}(g)|+|\Lambda_{{B}\times \overline{B}}(g)|+|\Lambda_{\overline{B}\times {B}}(g)|=|\Lambda_{G\times G}(g)|$$
	and hence, from what seen above, $\lambda+\overline{\lambda}+(k-\lambda)+(k-\lambda)=v$ which immediately gives the assertion.
\end{proof}

\begin{proposition}\label{trivialHPDFs}
	A PDF with only two blocks necessarily consists of a difference set and its complement.
	More specifically, a HPDF with only two blocks necessarily consists of a Hadamard difference set and its complement.
\end{proposition}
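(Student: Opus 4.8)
The plan is to use Proposition~\ref{anderson} to collapse the two block-conditions into a single one. Let $\{B,\overline B\}$ be a $(v,[k,v-k],\lambda)$-PDF; since two blocks partition $G$, the second block must be the complement $\overline B=G\setminus B$. Fix a non-zero $g\in G$, and let $\lambda(g)$ and $\overline\lambda(g)$ be the multiplicities of $g$ in $\Delta B$ and in $\Delta\overline B$ respectively. The defining property of a PDF is exactly that $\lambda(g)+\overline\lambda(g)=\lambda$ for every non-zero $g$. I would then invoke Proposition~\ref{anderson}, which gives $\overline\lambda(g)=v-2k+\lambda(g)$; substituting this into the previous equation yields $2\lambda(g)=\lambda-v+2k$, a quantity not depending on $g$. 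Hence $\lambda(g)$ takes a constant value $\lambda_1=k-\tfrac12(v-\lambda)$, so $B$ is a $(v,k,\lambda_1)$-DS and, by Proposition~\ref{complementary}, $\overline B$ is its complement. This settles the first assertion.

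For the Hadamard refinement I would impose $v=2\lambda$. Then $\lambda_1=k-\tfrac14 v$, which in particular forces $4\mid v$. Feeding $\lambda_1$ into the basic identity $\lambda_1(v-1)=k(k-1)$ of the difference set $B$ and clearing denominators, one arrives at $4k^2-4kv+v^2=v$, i.e.
\[
(2k-v)^2=v .
\]
So $v$ is a perfect square divisible by $4$; writing $v=4u^2$ one gets $2k-v=\pm 2u$, whence $k=2u^2\pm u$, $\lambda_1=u^2\pm u$, and $\lambda=2u^2$. Picking the sign that makes $B$ the smaller block gives the Hadamard parameters $(4u^2,2u^2-u,u^2-u)$, the other sign merely describing its complement. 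Thus a two-block HPDF is a Hadamard difference set together with its complement.

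I do not anticipate a genuine obstacle here: the crux is the observation that for a two-block PDF the two ``constant-multiplicity'' requirements on $\Delta B$ and $\Delta\overline B$ are not independent but are tied together by Proposition~\ref{anderson}, and this one linear relation already forces $B$ to be a difference set. The only delicate bookkeeping is in the Hadamard step --- checking that $v=2\lambda$ together with the integrality of $\lambda_1=k-v/4$ forces $4\mid v$, that an even perfect square is a multiple of $4$ so that indeed $v=4u^2$, and that the ``$+$'' choice of sign in $k=2u^2\pm u$ merely re-labels the complementary block.
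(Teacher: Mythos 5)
Your proof is correct. The first assertion is handled exactly as in the paper: the PDF condition $\lambda(g)+\overline\lambda(g)=\lambda$ combined with Proposition~\ref{anderson} forces $\lambda(g)=k-\tfrac12(v-\lambda)$ to be constant, so $B$ is a difference set. For the Hadamard refinement, however, you take a genuinely different and more elementary route. The paper first invokes the Bruck--Ryser--Chowla theorem (since $v$ is even, $k-\mu=\lambda/2$ must be a square, giving $\lambda=2u^2$) and only then uses the counting identity $\mu(v-1)=k(k-1)$ to solve a quadratic for $k$. You instead substitute $\lambda_1=k-v/4$ directly into that same counting identity and obtain $(2k-v)^2=v$, so that $v$ is a perfect square; combined with $4\mid v$ (forced by the integrality of $\lambda_1$), an even perfect square must be $4u^2$, and the parameters $k=2u^2\pm u$, $\lambda_1=u^2\pm u$ drop out, the sign only distinguishing $B$ from $\overline B$. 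Your computation checks out ($4kv-v^2+v=4k^2$ is exactly $(2k-v)^2=v$), so you get the same conclusion without appealing to Bruck--Ryser--Chowla at all, which is a small but real simplification; the paper's route, on the other hand, makes visible that the squareness of $v$ is an instance of the general BRC obstruction rather than an accident of this parameter family.
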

\begin{proof}
	If $\cal F$ is a $(v,[k,v-k],\lambda)$-PDF, then ${\cal F}=\{B,\overline B\}$ with $B$ a $k$-subset 
	of an additive group $G$ of order $v$ and $\overline B=G\setminus B$. 
	Given any element $g\in G\setminus\{0\}$
	denote by $\mu(g)$ and $\overline \mu(g)$ the multiplicities of $g$ in $\Delta B$ and $\Delta \overline B$, 
	respectively. By definition of a PDF we have $\mu(g)+\overline \mu(g)=\lambda$ for
	every $g\in G$. On the other hand we have $\overline \mu(g)=v-2k+\mu(g)$ by Proposition \ref{anderson}
	and then $v-2k+2\mu(g)=\lambda$, i.e., $\mu(g)={2k-v+\lambda\over2}$ is a constant.
	This means that $B$ is a difference set and the first assertion follows.
	
	Now assume that $\{B,\overline B\}$ is a $(v,[k,v-k],\lambda)$-HPDF so that we have $v=2\lambda$.
	Without loss of generality we can assume that $k\leq v-k$, i.e., $k\leq {v\over2}$.
	Then we deduce that $B$ is a $(v,k,\mu)$-DS with $\mu={2k-\lambda\over2}$. Considering that $v$ is even,
	$k-\mu$ must be a square by the Bruck-Ryser-Chowla theorem. It follows that ${\lambda\over2}=u^2$,
	hence $\mu=k-u^2$ for some integer $u$. Then, from the trivial identity $\mu(v-1)=k(k-1)$, we get 
	$(k-u^2)(4u^2-1)=k(k-1)$. Hence $k$ is a solution of the quadratic equation
	$x^2-4u^2x+4u^4-u^2=0$ so that either $k=2u^2+u$ or $k=2u^2-u$. On the other hand we have
	assumed $k\leq{v\over2}$, hence $k=2u^2-u$ and $\mu=u^2-u$.
	We conclude that $B$ is a $(4u^2,2u^2-u,u^2-u)$-DS, i.e., $B$ is a Hadamard difference set.
	The assertion follows.
\end{proof}

\section{Some necessary conditions}

The following proposition is straightforward.

\begin{proposition}\label{elementary}
	Let $\mathcal{F}$ be a $(G, [k_1, \ldots , k_t], \lambda)$-HPDF. Then
	\begin{enumerate}[(i)]
		\item $k_1 + \cdots + k_t = 2\lambda$;\label{prop1a}
		\item $k_1^2 + \cdots + k_t^2 = \lambda (2\lambda +1)$;\label{prop1b}
		\item $\lambda$ is even, hence $o(G) \equiv 0$ $\pmod 4$.\label{prop1c}
	\end{enumerate}
\end{proposition}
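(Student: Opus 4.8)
The plan is to derive all three items from the two defining identities of a PDF together with the Hadamard condition $v=o(G)=2\lambda$. Items \eqref{prop1a} and \eqref{prop1b} should be essentially immediate: since $\mathcal{F}$ is a PDF, its blocks partition $G$, so $k_1+\cdots+k_t=v$, and substituting $v=2\lambda$ gives \eqref{prop1a}. For \eqref{prop1b}, I would start from the other standard PDF identity $\sum_i k_i(k_i-1)=\lambda(v-1)$, rewrite it as $\sum_i k_i^2 - \sum_i k_i = \lambda(v-1)$, then use \eqref{prop1a} in the form $\sum_i k_i = 2\lambda$ to obtain $\sum_i k_i^2 = \lambda(v-1)+2\lambda = \lambda(2\lambda-1)+2\lambda = \lambda(2\lambda+1)$.

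The only item requiring a genuine idea is \eqref{prop1c}, the claim that $\lambda$ is even. The plan is a parity argument on the two identities just established. Work modulo $2$: from \eqref{prop1a}, $\sum_i k_i \equiv 0 \pmod 2$, so an even number of the $k_i$ are odd; call this number $m$, so $m$ is even. On the other hand, $k_i^2 \equiv k_i \pmod 2$, so modulo $2$ we have $\sum_i k_i^2 \equiv \sum_i k_i \equiv m \equiv 0$. But \eqref{prop1b} says $\sum_i k_i^2 = \lambda(2\lambda+1)$, and since $2\lambda+1$ is odd, $\lambda(2\lambda+1) \equiv \lambda \pmod 2$. Hence $\lambda \equiv 0 \pmod 2$, i.e., $\lambda$ is even. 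Finally, $o(G) = 2\lambda$ with $\lambda$ even gives $o(G) \equiv 0 \pmod 4$.

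I expect no real obstacle here; the main point is simply to be careful that the congruence $k_i^2\equiv k_i\pmod 2$ is applied to each summand before combining, so that \eqref{prop1b} is compared against \eqref{prop1a} in a way that isolates the parity of $\lambda$. An alternative, perhaps slightly cleaner, route to \eqref{prop1c} is to combine the two identities directly: subtract to get $\sum_i k_i^2 - \sum_i k_i = \sum_i k_i(k_i-1) = \lambda(2\lambda-1)$, observe that each $k_i(k_i-1)$ is even (product of consecutive integers), so the left-hand side is even, hence $\lambda(2\lambda-1)$ is even, and since $2\lambda-1$ is odd, $\lambda$ is even. I would likely present this second version since it avoids introducing the counting variable $m$, but either is routine.
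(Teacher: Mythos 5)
Your proof is correct and follows essentially the same route as the paper: items (i) and (ii) from the partition and difference-count identities with $v=2\lambda$, and item (iii) from the parity observation $\sum_i k_i \equiv \sum_i k_i^2 \pmod 2$ combined with (i) and (ii). Your alternative derivation of (iii) via the evenness of each $k_i(k_i-1)$ is an equally valid, equivalent rephrasing.
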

\begin{proof} Let $\mathcal{F}$ be a $(G, [k_1, \ldots , k_t], \lambda)$-HPDF.
	By definition of a HPDF, we have $k_1 + \cdots + k_t =|G|=2\lambda$.
	From $\Delta \mathcal{F} = \lambda \, (G \setminus \{0\})$
	we obtain $$|\Delta \mathcal{F}| = k_1(k_1-1) + \cdots + k_t(k_t-1) = \lambda (2\lambda-1)$$ 
	and then
	$$k_1^2 + \cdots k_t^2 = \lambda (2\lambda-1)+k_1 + \cdots + k_t.$$ 
	Then (ii) follows taking into consideration (i). 
	
	We obviously have $k_1 + \cdots + k_t\equiv k_1^2 + \cdots + k_t^2$ (mod 2).
	Thus, using (i) and (ii), we get $2\lambda\equiv \lambda(2\lambda+1)$ (mod 2)
	and (iii) follows.
\end{proof}
As a consequence of the above proposition, every HPDF in a group $G$ has $o(G)$ necessarily doubly even.
However, it is worth to observe that this can be deduced in another way as follows. Assuming that $\cal F$ is a HPDF
in $G$ and $o(G)\equiv2$ (mod 4), then $\lambda={o(G)\over2}$ would be odd and this is absurd since the multiplicity
in $\Delta{\cal F}$ of any involution $i$ of $G$  is necessarily even: indeed, if $x-y=i$ is a representation of $i$ as a
difference from $\cal F$, then $y-x=i$ is a representation of $i$ as a
difference from $\cal F$ as well.

In the following table we list the very first parameter sets satisfying the necessary conditions of Proposition \ref{elementary}
disregarding those were $K$ has size 2 in view of Proposition \ref{trivialHPDFs}.
$$
\begin{array}{cc}
\begin{array}{c|c|c}
v & K & \lambda \\
\hline
{\color{red}{20}} &  {\color{red}{[ 1,2,3,14  ]}} & {\color{red}10} \\
{\color{teal} 24} &  {\color{teal} [ 1^3,2^2,17  ]} & {\color{teal} 12} \\
\color{red}28 &  {\color{red}{[ 1,9,18 ]}} & \color{red}14 \\
\color{red}28  &  {\color{red}{[ 3,6,19  ]}} & \color{red}14 \\
{\color{blue} 32 } &  {\color{blue} [ 2^2,6,22  ] } & {\color{blue} 16}\\
{\color{teal} 36} &  {\color{teal} [ 3,9,24  ]} & {\color{teal} 18}\\
\color{red}36  &  {\color{red}{[ 3,4^2,25  ]}} & \color{red}18 \\
\color{red}36  &  {\color{red}{[ 1^5,6,25  ]}} & \color{red}18 \\
{\color{teal} 40} & {\color{teal}  [ 1,3,9,27  ]} & {\color{teal} 20} \\
40  &  [ 3^4, 28 ] & 20 \\
40 &   [ 1^2,3^2,4,28 ] & 20 \\
\color{red}40 &   {\color{red}{[ 1^4,4^2,28 ]}} & \color{red}20 \\ 
40 &  [ 1^3,2^2,5,28 ] & 20 \\
\end{array}
\end{array}
$$

An HPDF with ``blue parameter set" has been found by Buratti \cite{Bur19} and it
is the only non-elementary HPDF known at this moment.

In Section \ref{new} we will determine an HPDF with ``green parameter set" in several groups.

By exhaustive computer search we have checked that an HPDF with ``red parameter set" does not exist.

The existence of an HPDF with ``uncolored parameter set" is still in doubt. 

The HPDFs with only two blocks have been already characterized in Proposition \ref{trivialHPDFs}.
Let us see what we can say about HPDFs with three blocks.
\begin{proposition}\label{k1k2k3}
	In a $(v,[k_1,k_2,k_3],\lambda)$-HPDF 
	we necessarily have
	$$k_{1,2}={2\lambda-k_3\pm\sqrt{2\lambda(2k_3+1)-3k_3^2}\over2}$$
\end{proposition}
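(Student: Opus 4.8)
The plan is to use the two defining identities for an HPDF with three blocks, namely $k_1+k_2+k_3=2\lambda$ from Proposition~\ref{elementary}(i) and $k_1^2+k_2^2+k_3^2=\lambda(2\lambda+1)$ from Proposition~\ref{elementary}(ii), and to treat $k_3$ as a parameter, solving for $k_1$ and $k_2$. First I would set $s=k_1+k_2=2\lambda-k_3$ and $p=k_1k_2$, so that $k_1$ and $k_2$ are the roots of the quadratic $x^2-sx+p=0$. The job is then to express $p$ in terms of $\lambda$ and $k_3$.

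To get $p$, I would use the algebraic identity $k_1^2+k_2^2=(k_1+k_2)^2-2k_1k_2=s^2-2p$. Substituting the second HPDF identity gives $\lambda(2\lambda+1)-k_3^2=s^2-2p$, hence $2p=s^2-\lambda(2\lambda+1)+k_3^2=(2\lambda-k_3)^2-\lambda(2\lambda+1)+k_3^2$. Expanding $(2\lambda-k_3)^2=4\lambda^2-4\lambda k_3+k_3^2$ and combining, $2p=4\lambda^2-4\lambda k_3+2k_3^2-2\lambda^2-\lambda=2\lambda^2-4\lambda k_3-\lambda+2k_3^2$, so $p=\frac{2\lambda^2-4\lambda k_3-\lambda+2k_3^2}{2}$. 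Then $k_{1,2}=\frac{s\pm\sqrt{s^2-4p}}{2}$, and I would compute the discriminant $s^2-4p=(2\lambda-k_3)^2-2(2\lambda^2-4\lambda k_3-\lambda+2k_3^2)=4\lambda^2-4\lambda k_3+k_3^2-4\lambda^2+8\lambda k_3+2\lambda-4k_3^2=4\lambda k_3+2\lambda-3k_3^2=2\lambda(2k_3+1)-3k_3^2$. This reproduces exactly the claimed formula $k_{1,2}=\frac{2\lambda-k_3\pm\sqrt{2\lambda(2k_3+1)-3k_3^2}}{2}$.

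Since this is a pure identity manipulation with no real obstacle, the only point worth a word of care is the labeling: the formula is symmetric in $\{k_1,k_2\}$ — the two choices of sign give the two values — so the statement is really "the multiset $\{k_1,k_2\}$ equals this pair," which is exactly how one should read it. I would present the two substitutions (Vieta plus the sum-of-squares identity) as a short displayed computation and then quote the quadratic formula; no case analysis or nontrivial step is needed. If one wanted, one could additionally remark that the discriminant $2\lambda(2k_3+1)-3k_3^2$ must be a nonnegative perfect square, which immediately bounds $k_3$ and is presumably how the admissible parameter table was generated, but that goes beyond what the proposition asserts.
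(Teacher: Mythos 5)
Your proposal is correct and follows essentially the same route as the paper: both derive $k_1k_2$ from the identities of Proposition \ref{elementary} via $(k_1+k_2)^2-2k_1k_2$ and then apply Vieta's formulas; you merely carry out explicitly the "trivial computations" the paper omits. No issues.
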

\begin{proof}
	By Proposition \ref{elementary} we have $k_1+k_2+k_3=2\lambda$ and $k_1^2+k_2^2+k_3^2=\lambda(2\lambda+1)$.
	The second identity can be rewritten as $(k_1+k_2)^2-2k_1k_2+k_3^2=2\lambda^2+\lambda$ and then, in view of the
	first identity, we have $(2\lambda-k_3)^2-2k_1k_2+k_3^2=2\lambda^2+\lambda$ which gives
	$$k_1\cdot k_2={2\lambda^2-(4k_3+1)\lambda+2k_3^2\over2}$$ 
	Considering that $k_1+k_2=2\lambda-k_3$ and recalling that two numbers having sum $s$ and product $p$ are the solutions of the quadratic
	equation $x^2-sx+p=0$, after trivial computations we get the assertion.
\end{proof}

The possible block-sizes of a $(v,[k_1,k_2,k_3],\lambda)$-HPDF
are strongly limited by the above proposition. Indeed we have the following.

\begin{corollary}\label{forbidden}
	The existence of a $(v,[k_1,k_2,k_3],\lambda)$-HPDF necessarily implies that no prime divisor of
	$(2k_1+1)(2k_2+1)(2k_3+1)$ is congruent to $5$ $($mod $6)$.
\end{corollary}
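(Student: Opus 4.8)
The plan is to extract everything from the two identities (i)--(ii) of Proposition~\ref{elementary} together with a single congruence obstruction for the binary form $X^2+3Y^2$. First I would record that, for any relabelling $\{i,j,\ell\}=\{1,2,3\}$, those identities force
$$2\lambda(2k_i+1)=(k_j-k_\ell)^2+3k_i^2 .$$
This is in fact the assertion (with indices permuted) that the quantity under the radical in Proposition~\ref{k1k2k3} equals $(k_1-k_2)^2$; I would simply re-derive it directly from $k_j+k_\ell=2\lambda-k_i$ and $k_j^2+k_\ell^2=\lambda(2\lambda+1)-k_i^2$ by substituting into $(k_j-k_\ell)^2=2(k_j^2+k_\ell^2)-(k_j+k_\ell)^2$. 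So the product $(2k_1+1)(2k_2+1)(2k_3+1)$ controls three numbers each of which is visibly of the shape $X^2+3Y^2$, and with the "$Y$-part" of the $i$-th one equal to $k_i$.

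Next comes the arithmetic heart. Suppose, for contradiction, that a prime $p\equiv5\pmod 6$ divides $(2k_1+1)(2k_2+1)(2k_3+1)$; then $p\mid 2k_i+1$ for some $i$, and by the boxed identity $p$ divides $(k_j-k_\ell)^2+3k_i^2$. Here I would invoke the standard fact (a one-line consequence of quadratic reciprocity) that for an odd prime $p\neq 3$ one has $\left(\tfrac{-3}{p}\right)=1$ if and only if $p\equiv 1\pmod 3$; since $p\equiv 5\pmod 6$ gives $p\equiv 2\pmod 3$, $-3$ is a non-residue modulo $p$. Consequently $p\mid k_i$: otherwise $k_i$ is a unit mod $p$ and $\bigl((k_j-k_\ell)k_i^{-1}\bigr)^2\equiv -3\pmod p$, contradicting non-residuosity. (If $k_j=k_\ell$ the identity just reads $2\lambda(2k_i+1)=3k_i^2$, and since $p\neq 3$ we again conclude $p\mid k_i$.) Equivalently, one can phrase this as: primes congruent to $2$ mod $3$ are inert in $\Z[\omega]$, hence divide $X^2+3Y^2$ only when they divide both $X$ and $Y$.

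To conclude, $p\mid k_i$ together with $p\mid 2k_i+1$ forces $p\mid 1$, which is absurd; hence no prime $\equiv 5\pmod 6$ can divide $(2k_1+1)(2k_2+1)(2k_3+1)$, proving the corollary. I expect the only point that is not pure bookkeeping with the defining identities of an HPDF to be the evaluation of the Legendre symbol $\left(\tfrac{-3}{p}\right)$ — and that is entirely classical, so in practice the proof is short.
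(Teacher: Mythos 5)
Your proof is correct and follows essentially the same route as the paper: both arguments reduce the perfect square $2\lambda(2k_i+1)-3k_i^2$ modulo a prime $p\mid 2k_i+1$, observe that $p\nmid k_i$, and conclude that $-3$ would have to be a quadratic residue mod $p$, which fails for $p\equiv 5\pmod 6$. The only cosmetic difference is that you make the discriminant of Proposition~\ref{k1k2k3} explicit as $(k_j-k_\ell)^2$ and run the argument contrapositively, whereas the paper simply cites that proposition to know the quantity is a square.
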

\begin{proof}
	Assume that a $(v,[k_1,k_2,k_3],\lambda)$-HPDF exists and let $p$
	be a prime factor of $(2k_1+1)(2k_2+1)(2k_3+1)$. Up to a reordering of the indices we can assume that $p$ is a divisor of $2k_3+1$
	and hence $p$ cannot divide $k_3$.
	By Proposition \ref{k1k2k3} it is clear that $2\lambda(2k_3+1)-3k_3^2$ must be a perfect square. So, in particular, it must be
	a square modulo $p$. Considering that $p$ is a divisor of $2k_3+1$ we have that $2k_3+1\equiv0$ (mod $p$). Also,
	$k_3^2$ is a non-zero square of $\Z_p$ since $p$ does not divide $k_3$. We conclude that $-3$ must be a square of $\Z_p$ and then,
	by the Quadratic Law of Reciprocity, we have either $p=3$ or $p\equiv1$ (mod 6).
\end{proof}

As a consequence of Corollary \ref{forbidden}, in a $(v,[k_1,k_2,k_3],\lambda)$-HPDF we cannot have, for instance, blocks
of size 2, 5, 7, 8, 11, 12, 14, 16, 17, ...

\section{Exploiting partial difference sets}

Note that Corollary \ref{forbidden} does not forbid the existence of a 
$$
(v,[k_1,k_2,k_3],\lambda)\mbox{-HPDF}
$$
 with a block of size 1 and indeed,
using Proposition \ref{k1k2k3}, we can see, for instance, that for every $n\geq2$ 
$$\displaystyle\biggl{(}{3^{2n-1}+1\over2}, \ \biggl{[}{3^{2n-1}+3^n\over2}, \ {3^{2n-1}-3^n\over2}, \ 1\biggl{]}, \ {3^{2n-1}+1\over4}\biggl{)}$$
is an admissible parameter set for a HPDF.
On the other hand we are going to see that a $(v,[k_1,k_2,1],\lambda)$-HPDF cannot exist.
To prove this we have to exploit a result on {\it partial difference sets}.

A $(v,k,\alpha,\beta)$ partial difference set (PDS) in a group $G$ is a $k$-subset $B$ of $G$ such that $\Delta B$
covers $\alpha$ times every non-zero element of $B$ and $\beta$ times every non-zero element of $G\setminus B$.

Among the various necessary conditions for the existence of
a non-trivial PDS we will need the following (see Proposition 3(d) in \cite{Ma}).
\begin{lemma}\label{Ma}
	If there exists a $(v,k,\alpha,\beta)$-PDS
	and $\gamma:=(\alpha-\beta)^2+4(k-\beta)$ is not a square, then
	we have $(v,k,\alpha,\beta)=(4t+1,2t,t-1,t)$ for a suitable integer $t$.
\end{lemma}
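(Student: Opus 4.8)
The plan is to transfer the problem to the Cayley graph $\Gamma:=\mathrm{Cay}(G,B)$ and invoke the classical spectral theory of strongly regular graphs. As is standard for results of this type, I take the PDS to be \emph{regular}, that is $0\notin B$ and $B=-B$ (a short group-ring computation gives $B=-B$ automatically whenever $\alpha\neq\beta$); then $\Gamma$ is a genuine undirected $k$-regular graph on $v$ vertices, and the defining property of a PDS says exactly that $\Gamma$ is strongly regular with parameters $(v,k,\alpha,\beta)$: adjacent vertices have $\alpha$ common neighbours, non-adjacent ones have $\beta$. Non-triviality of $B$ gives $1\le k\le v-2$, so $\Gamma$ is connected, is not complete, and possesses non-adjacent vertices.

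Now I would read off the spectrum of the adjacency matrix $A$. From $A^{2}=kI+\alpha A+\beta(J-I-A)$, i.e. $A^{2}-(\alpha-\beta)A-(k-\beta)I=\beta J$, restricting to the hyperplane orthogonal to the all-ones vector kills $J$, so every eigenvalue of $A$ other than $k$ is a root of $x^{2}-(\alpha-\beta)x-(k-\beta)$, namely $\theta_{\pm}=\tfrac12\bigl((\alpha-\beta)\pm\sqrt{\gamma}\bigr)$ with $\gamma=(\alpha-\beta)^{2}+4(k-\beta)$. Since $A$ is a symmetric integer matrix its spectrum is real and its characteristic polynomial has integer coefficients; because $\gamma$ is not a perfect square (hence $\gamma>0$, as $0$ is a square), $\theta_{+}$ and $\theta_{-}$ are irrational Galois conjugates, so they occur with a common multiplicity $m$ and both really occur. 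Connectedness makes $k$ a simple eigenvalue, whence $1+2m=v$: thus $v$ is odd and $m=\tfrac{v-1}{2}$.

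Finally I would pin down the parameters using two identities. From $\mathrm{tr}(A)=0$ and $\theta_{+}+\theta_{-}=\alpha-\beta$ we obtain $k+m(\alpha-\beta)=0$, i.e.
$$2k=(v-1)\,e,\qquad\text{where }\ e:=\beta-\alpha\ (>0\ \text{since }k>0).$$
Hence $v-1=2k/e$. Plugging this into the identity $k(k-\alpha-1)=(v-k-1)\beta$, valid for any strongly regular graph (count the edges joining the neighbourhood of a vertex to its non-neighbours), and simplifying yields $2\beta=e(k+e-1)$. The elementary bound $\beta\le k$ (two non-adjacent vertices share at most $k$ common neighbours) then reads $k(e-2)\le e(1-e)\le 0$, forcing $e\le 2$; and $e=2$ would give the impossible $0\le-2$, so $e=1$. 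Therefore $\beta=\tfrac{k}{2}$: $k=2t$ is even with $\beta=t$ and $\alpha=\beta-1=t-1$, while $v-1=2k=4t$. Thus $(v,k,\alpha,\beta)=(4t+1,2t,t-1,t)$, as required.

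There is no serious obstacle here once the graph is available; the two points that need a little care are (i) that the conjugate eigenvalues $\theta_{\pm}$ genuinely appear with equal multiplicity and that $k$ is simple — both follow once one notes that $\gamma$ being a non-square excludes the imprimitive strongly regular graphs (disjoint unions of cliques and complete multipartite graphs), for which $\gamma$ is a perfect square, so that $\Gamma$ is forced to be connected with connected complement; and (ii) the short final step $e=1$ extracted from $\beta\le k$, which is what converts the ``conference-graph'' conclusion into the stated explicit parameter set.
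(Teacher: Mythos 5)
Your argument is correct and complete, and it is worth noting that the paper itself gives no proof of this lemma: it is quoted from Proposition 3(d) of Ma's survey, whose proof is exactly the strongly-regular-graph computation you carry out (pass to $\mathrm{Cay}(G,B)$, read off the restricted eigenvalues $\theta_{\pm}=\tfrac12\bigl((\alpha-\beta)\pm\sqrt{\gamma}\bigr)$, use irrationality of $\sqrt{\gamma}$ to force equal multiplicities, then pin down the parameters from $\mathrm{tr}(A)=0$, the identity $k(k-\alpha-1)=(v-k-1)\beta$ and the bound $\beta\le k$). Two points deserve emphasis. First, your opening reduction to a \emph{regular} PDS ($0\notin B$ and $B=-B$) is not a cosmetic normalization but a genuinely necessary hypothesis: with the paper's bare definition the statement is false as written, since $\{1,2,4\}\subset\Z_7$ is a $(7,3,1,1)$-PDS with $\gamma=8$ not a square, yet $\alpha\neq\beta-1$. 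Regularity is the standing assumption in Ma's Proposition 3; it holds automatically when $\alpha\neq\beta$ and $0\notin B$, and it is satisfied in the paper's only application of the lemma, where $\{0\}$ is a separate block of the PDF and $\beta=\alpha+1$. Second, your early sentence ``$1\le k\le v-2$, so $\Gamma$ is connected'' is a non sequitur as it stands (a disjoint union of cliques satisfies those inequalities), but you repair it correctly in the closing paragraph: a disconnected strongly regular graph has $\beta=0$ and $\alpha=k-1$, hence $\gamma=(k+1)^2$, so non-squareness of $\gamma$ does force connectedness and the simplicity of the eigenvalue $k$. With those two clarifications promoted from parenthetical remarks to actual steps, the proof is exactly the standard one.
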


In the next proposition we will see that if a PDF in a group $G$ has exactly three blocks one of which has size 1, then 
each of the other two blocks is a PDS or a DS in $G$.

\begin{proposition}\label{partial}
	Let ${\cal F}=\{B_1,B_2,\{0\}\}$ be a $(v,[k_1,k_2,1],\lambda)$-PDF in $G$. Then, for $i=1,2$ we have:
	\begin{enumerate}[(i)]
		\item $B_i$ is a $(v,{v-1\over2},{v-3\over4})$-DS if $v-\lambda$ is odd;
		\item $B_i$ is a $(G,k_i,\alpha_i,\alpha_i+1)$-PDS with $\alpha_i=k_i+{\lambda-v\over2}$ if $v-\lambda$ is even.
	\end{enumerate}
\end{proposition}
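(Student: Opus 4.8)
The plan is to play $B_1$ off against its complement $\overline{B_1}=G\setminus B_1$, which in the present situation is exactly $B_2\cup\{0\}$. First I would record the elementary bookkeeping: the singleton block contributes nothing to $\Delta\mathcal F$, so for every non-zero $g$ the multiplicities $\lambda_1(g)$ and $\lambda_2(g)$ of $g$ in $\Delta B_1$ and $\Delta B_2$ satisfy $\lambda_1(g)+\lambda_2(g)=\lambda$, while the partition gives $k_1+k_2=v-1$. Next I would apply Proposition \ref{anderson} to $B_1$, which says that $g$ has multiplicity $v-2k_1+\lambda_1(g)$ in $\Delta\overline{B_1}$. On the other hand $\Delta\bigl(B_2\cup\{0\}\bigr)=\Delta B_2\cup B_2\cup(-B_2)$, so that same multiplicity also equals $\lambda_2(g)+\chi(g)+\chi(-g)$, where $\chi$ is the characteristic function of $B_2$. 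Equating the two expressions and substituting $\lambda_1(g)=\lambda-\lambda_2(g)$ gives
$$2\lambda_2(g)=v-2k_1+\lambda-\chi(g)-\chi(-g),$$
so $\chi(g)+\chi(-g)\equiv v-\lambda\pmod 2$ for every non-zero $g$. Everything after this point is forced.

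If $v-\lambda$ is even, then $\chi(g)+\chi(-g)\in\{0,2\}$ for all non-zero $g$, i.e.\ $B_2=-B_2$, and the displayed identity collapses to $\lambda_2(g)=\tfrac{v-2k_1+\lambda}{2}-\chi(g)$. Hence $\Delta B_2$ covers each non-zero element of $B_2$ exactly $\tfrac{v-2k_1+\lambda}{2}-1$ times and each non-zero element outside $B_2$ exactly $\tfrac{v-2k_1+\lambda}{2}$ times, so $B_2$ is a $(G,k_2,\alpha_2,\alpha_2+1)$-PDS with $\alpha_2=\tfrac{v-2k_1+\lambda}{2}-1$; eliminating $k_1$ through $k_1=v-1-k_2$ rewrites this as $\alpha_2=k_2+\tfrac{\lambda-v}{2}$. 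Running the identical argument with $B_1$ and $B_2$ interchanged (so that now $\overline{B_2}=B_1\cup\{0\}$) gives the companion statement for $B_1$; together this is (ii).

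If $v-\lambda$ is odd, then $\chi(g)+\chi(-g)$ is always odd, hence equal to $1$: each antipodal pair $\{g,-g\}$ of non-zero elements meets $B_2$ in exactly one point. This cannot happen when $g$ is an involution, so $G$ has odd order; moreover $B_1$ and $B_2$ are complementary transversals of the $\tfrac{v-1}{2}$ antipodal pairs, giving $k_1=k_2=\tfrac{v-1}{2}$. The displayed identity now reads $\lambda_2(g)=\tfrac{v-2k_1+\lambda-1}{2}=\tfrac{\lambda}{2}$, a constant, so $B_2$ is a difference set; substituting $k_1=k_2=\tfrac{v-1}{2}$ into the PDF counting identity $k_1(k_1-1)+k_2(k_2-1)=\lambda(v-1)$ forces $\lambda=\tfrac{v-3}{2}$, whence the index equals $\tfrac{\lambda}{2}=\tfrac{v-3}{4}$. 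Thus $B_2$, and symmetrically $B_1$, is a $(v,\tfrac{v-1}{2},\tfrac{v-3}{4})$-DS, which is (i).

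The genuinely load-bearing point is the congruence $\chi(g)+\chi(-g)\equiv v-\lambda\pmod 2$: it is what splits the problem into the clean dichotomy ``$B_2$ is symmetric'' versus ``$B_2$ is a transversal of the antipodal pairs'', after which both branches are just a matter of substituting $k_1+k_2=v-1$ and the PDF counting identity. The only thing I would double-check is the behaviour of very small blocks (such as $k_i=1$), but there the formulas for the $\alpha_i$, or for $\lambda$, coming out of the two branches turn out to be inconsistent unless the configuration is the trivial one in $\Z_3$, so no separate case analysis is needed.
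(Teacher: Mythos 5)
Your proof is correct and follows essentially the same route as the paper's: you compare $\Delta B_2$ with $\Delta\overline{B_1}$ via Proposition \ref{anderson}, account for the two extra differences involving $0$, and split on the parity of $v-\lambda$. Your packaging of the case analysis into the single congruence $\chi(g)+\chi(-g)\equiv v-\lambda\pmod 2$ is just a tidier reformulation of the paper's three-case formula for $\lambda_2(g)$, after which both arguments conclude identically.
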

\begin{proof}
	For $i=1,2$ and for $g\in G$, let $\lambda_i(g)$ be the multiplicity of $g$ in $\Delta B_i$.
	The assumption that $\cal F$ is a $(v,[k_1,k_2,1],\lambda)$-PDF means that we have 
	\begin{equation}\label{lambda}
	\lambda_1(g)+\lambda_2(g)=\lambda\quad \forall \ g\in G\setminus\{0\}
	\end{equation}
	Let $\overline B_1$ be the complement of $B_1$ in $G$ and let $\overline{\lambda_1}(g)$ be the multiplicity of $g$ in $\Delta\overline{B_1}$.
	We have $\overline{B_1}=B_2\cup\{0\}$ so that the possible representations of $g\in G$ as a difference from $\overline{B_1}$ which are
	not representations of $g$ as a difference from $B_2$ 
	are $g=g-0$ if $g\in B_2$, and $g=0-(-g)$ if $-g\in B_2$. Thus we can write:
	\begin{equation}\label{cases}
	\lambda_2(g)=
	\begin{cases}
	\overline{\lambda_1}(g) & {\rm if} \ \{g,-g\}\subset B_1;\cr
	\overline{\lambda_1}(g)-2 & {\rm if} \ \{g,-g\}\subset B_2; \cr 
	\overline{\lambda_1}(g)-1 & {\rm if} \ |\{g,-g\}\cap B_i|=1 \ {\rm for} \ i=1,2\cr 
	\end{cases}
	\end{equation}
	In the following, we distinguish two cases according to the parity of $v-\lambda$
	and keep in mind that we have 
	\begin{equation}\label{anderson2}\overline{\lambda_1}(g)=v-2k_1+\lambda_1(g)
	\end{equation} 
	in view of Proposition \ref{anderson}.
	
	\medskip
	\textbf{Case 1:} $v-\lambda$ is odd.
	
	{\leftskip=0.5cm
	Observe that for any given $g\in G\setminus\{0\}$ we cannot have $\{g,-g\}\subset B_1$ otherwise (\ref{lambda}), (\ref{cases}) and (\ref{anderson2}) would give
	$\lambda=v-2k_1+2\lambda_1(g)$ contradicting the assumption that $v-\lambda$ is odd.
	
	Analogously, we cannot have $\{g,-g\}\subset B_2$ otherwise (\ref{lambda}), (\ref{cases}) and (\ref{anderson2}) would give
	$\lambda=v-2k_1+2\lambda_1(g)-2$ contradicting again that $v-\lambda$ is odd.
	
	We conclude that $g$ and $-g$  lie in different $B_i$'s for every $g\in G\setminus\{0\}$. 
	It follows, in particular, that $G$ is involution-free:
	if $g$ is an involution, we cannot have $g\in B_1$ and $-g\in B_2$ since $g$ and $-g$ are the same.
	Thus $G$ has odd order and we 
	have $B_2=-B_1$ so that $k_1=k_2={v-1\over2}$. 
	Considering that ${\cal F}$ is a $(v,[k_1,k_2,1],\lambda)$-PDF we must have 
	$\lambda(v-1)=k_1(k_1-1)+k_2(k_2-1)$ and hence $\lambda(v-1)=2\cdot{v-1\over2}\cdot{v-3\over2}$
	which gives $\lambda={v-3\over2}$.
	Finally, (\ref{cases}) and (\ref{anderson2}) give 
	$\lambda_2(g)=v-2\cdot{v-1\over2}+\lambda_1(g)-1=\lambda_1(g)$. It follows, by (\ref{lambda}), that $\lambda_1(g)=\lambda_2(g)={\lambda\over2}={v-3\over4}$.
	It is now clear that $B_1$ and $B_2$ are $(v,{v-1\over2},{v-3\over4})$-DSs.

}
	\medskip
	\textbf{Case 2:} $v-\lambda$ is even.
	
{\leftskip=0.5cm
	Here, there is no $g\in G$ such that $g\in B_1$ and $-g\in B_2$.
	Indeed, in the opposite case,  (\ref{lambda}), (\ref{cases}) and (\ref{anderson2}) would give
	$\lambda=v-2k_1+2\lambda_1(g)-1$ contradicting the assumption that $v-\lambda$ is even.
	Thus, by (\ref{cases}), we have
	$$\lambda_2(g)=
	\begin{cases}
	v-2k_1+\lambda_1(g) & {\rm if} \ g\in B_1;\cr
	v-2k_1+\lambda_1(g)-2 & {\rm if} \ g\in B_2
	\end{cases}$$
	Using again (\ref{lambda}) and (\ref{anderson2}) we get $\lambda=v-2k_1+2\lambda_1(g)$ or $v-2k_1+2\lambda_1(g)-2$ according to
	whether $g\in B_1$ or $g\in B_2$, respectively. Solving these identities with respect to $\lambda_1(g)$ we finally get
	$$\lambda_1(g)=
	\begin{cases}
	\alpha_1 & {\rm if} \ g\in B_1;\cr
	\alpha_1+1 & {\rm if} \ g\in B_2
	\end{cases}$$
	This precisely means that $B_1$ is a $(G,k_1,\alpha_1,\alpha_1+1)$-PDS.
	In the same way, exchanging the roles of $B_1$ and $B_2$ one gets that 
	$B_2$ is a $(G,k_2,\alpha_2,\alpha_2+1)$-PDS.

}

\end{proof}

\begin{corollary}
	A $(v, [k_1,k_2,1], \lambda)$-HPDF cannot exist.
\end{corollary}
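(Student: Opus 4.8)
The plan is to feed the structure theorem for three‑block PDFs with a singleton block (Proposition \ref{partial}) into the stated non‑existence results for partial difference sets. After translating so that the singleton block is $\{0\}$, write the putative HPDF as $\mathcal{F}=\{B_1,B_2,\{0\}\}$; then $v=2\lambda$, and by Proposition \ref{elementary}(iii) $\lambda$ is even, so $v-\lambda=\lambda$ is even. Hence case (ii) of Proposition \ref{partial} applies: for $i=1,2$, $B_i$ is a $(G,k_i,\alpha_i,\alpha_i+1)$-PDS with $\alpha_i=k_i+\tfrac{\lambda-v}{2}=k_i-\tfrac{\lambda}{2}$. The first computation is the discriminant governing the associated strongly regular graph,
$$\gamma:=\bigl(\alpha_i-(\alpha_i+1)\bigr)^2+4\bigl(k_i-(\alpha_i+1)\bigr)=1+4\Bigl(\tfrac{\lambda}{2}-1\Bigr)=2\lambda-3=v-3,$$
which is the same for $B_1$ and $B_2$ and is odd.

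If $v-3$ is not a perfect square, then Lemma \ref{Ma} applied to $B_1$ forces $(v,k_1,\alpha_1,\alpha_1+1)=(4t+1,2t,t-1,t)$, so $v=4t+1$ is odd, contradicting $v=2\lambda$. So assume $v-3=m^2$, necessarily with $m$ odd. On the one hand, Proposition \ref{k1k2k3} with $k_3=1$ gives $k_{1,2}=\tfrac{(v-1)\pm\sqrt{3(v-1)}}{2}$, so $3(v-1)$ is a square too; writing $3(v-1)=(3n')^2$, i.e. $v-1=3n'^2$, we get $k_1-k_2=3n'$, and combining with $v-3=m^2$ yields $m^2=3n'^2-2$, whence $\gcd(m,n')=1$ (a common divisor divides $2$, and $m$ is odd). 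On the other hand, since $\gamma=m^2$ is a square, $B_1$ is not of conference type, so the restricted eigenvalues $\tfrac{-1\pm m}{2}$ of its strongly regular graph have non‑negative integer multiplicities; the standard formula gives one of them as $f=\tfrac12\bigl((v-1)-\tfrac{k_1-k_2}{m}\bigr)=\tfrac12\bigl((v-1)-\tfrac{3n'}{m}\bigr)$. As $v$ is even, $(v-1)$ is odd, so integrality of $f$ forces $\tfrac{3n'}{m}$ to be an odd integer; in particular $m\mid 3n'$, which together with $\gcd(m,n')=1$ gives $m\mid 3$, i.e. $m\in\{1,3\}$. But $m=3$ gives $v=12$ with $3(v-1)=33$ not a square, while $m=1$ gives $v=4$, forcing $n'=1$, $k_1-k_2=3=k_1+k_2$, hence $k_2=0$, not an admissible block size. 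Both cases are impossible, so no such HPDF exists.

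The step I expect to be the real obstacle is the square case: Lemma \ref{Ma} is silent there, and indeed the parameter set $(28,[1,9,18],14)$ passes every ``elementary'' constraint, so one genuinely needs the integrality of the strongly‑regular‑graph multiplicities, and one must take care to single out the eigenvalue whose multiplicity formula exposes the divisibility obstruction and to exploit the parity of $v-1$.
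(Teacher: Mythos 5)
Your proof is correct, and it is worth comparing carefully with the paper's, because the two diverge precisely at the computation of the discriminant $\gamma$ of Lemma \ref{Ma}. You both follow the same skeleton: translate so the singleton is $\{0\}$, invoke Proposition \ref{partial}(ii) to make each $B_i$ a $(G,k_i,\alpha_i,\alpha_i+1)$-PDS with $\alpha_i=k_i-\lambda/2$, and feed this into Lemma \ref{Ma} together with Proposition \ref{k1k2k3}. But with $\beta=\alpha_i+1$ the lemma's quantity is $\gamma=1+4\bigl(k_i-(\alpha_i+1)\bigr)=2\lambda-3$, which is what you obtain; the paper computes $1+4(k_i-\alpha_i)=2\lambda+1$, apparently dropping the $-1$ coming from $\beta=\alpha+1$. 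This matters: the paper's one-line finish ($6\lambda-3=3(\mu^2-2)$ forces $\mu^2\equiv 2\pmod 3$, impossible) only works for $\gamma=2\lambda+1$; with the corrected value one gets $6\lambda-3=3(\mu^2+2)$, i.e. $\mu^2\equiv1\pmod 3$, which is perfectly possible --- indeed $(28,[1,9,18],14)$ passes both square conditions, exactly as you observe. So the extra step you supply --- integrality of the strongly-regular-graph eigenvalue multiplicities $\tfrac12\bigl((v-1)\mp\tfrac{k_1-k_2}{m}\bigr)$, combined with $\gcd(m,n')=1$ extracted from $m^2=3n'^2-2$ --- is not a redundant flourish: it is what actually closes the argument, and it does correctly eliminate the surviving cases $m\in\{1,3\}$. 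The one thing you should add is the (standard, and available in Ma's survey) justification that $B_1$ is a \emph{regular} PDS: $0\notin B_1$ because $\{0\}$ is a block of the partition, and $B_1=-B_1$ because $\alpha_1\neq\alpha_1+1$; only then is the Cayley graph of $B_1$ strongly regular and the multiplicity condition applicable (this works for non-abelian $G$ as well). With that sentence inserted, your proof is complete and, as written, more robust than the one in the paper.
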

\begin{proof}
	Assume that there exists a $(v, [k_1,k_2,1], \lambda)$-HPDF so that $v-\lambda=\lambda$ is even.
	Up to a translation we can assume that the block of size 1 is $\{0\}$.
	Then, by Proposition \ref{partial}, the block of size $k_1$ is a $(2\lambda,k_1,\alpha,\alpha+1)$-PDS with
	$\alpha=k_1-{\lambda\over2}$.  Here the parameter $\gamma$ mentioned in Lemma \ref{Ma} is 
	$1+4(k_1-k_1+{\lambda\over2})=2\lambda+1$. Thus, in view of the same lemma, $2\lambda+1$ is a perfect square, say $2\lambda+1=\mu^2$,
	otherwise we should have $v=4t+1$ which is absurd.
	By Proposition \ref{k1k2k3} it is also necessary that $6\lambda-3$ is a perfect square. Now note that we have $6\lambda-3=3(2\lambda-1)=3(\mu^2-2)$
	so that $3$ should divide $\mu^2-2$. This is absurd since 2 is not a square modulo 3.
\end{proof}

\section{Exploiting subgroups of index 2}

The following proposition exploits the possible existence of a subgroup of index 2.

\begin{proposition}\label{index2} 
	Let ${\cal F}=\{B_1,\dots,B_t\}$ be a $(G, [k_1, \ldots, k_t], \lambda)$-HPDF, assume that $G$ has a  subgroup $H$ of index $2$,
	and set $|B_i\cap H|=s_i$ for $i=1,\dots,t$. Then the following identities hold:
	$$s_1+ \ ... \ + s_t=\lambda\quad\quad{\rm and}\quad\quad
	2s_1(k_1-s_1)+ \ ... \ +2s_t(k_t-s_t)=\lambda^2$$
\end{proposition}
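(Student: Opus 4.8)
The plan is to count, in two different ways, the differences from $\mathcal F$ that land in the subgroup $H$. For the first identity, I would observe that each block $B_i$ splits as $B_i = (B_i\cap H)\sqcup(B_i\setminus H)$, and that a difference $x-y$ with $x,y\in B_i$ lies in $H$ precisely when $x$ and $y$ are on the same side of the index-2 partition $G = H\sqcup(G\setminus H)$ — that is, when $\{x,y\}\subseteq B_i\cap H$ or $\{x,y\}\subseteq B_i\setminus H$. Hence the number of differences from $B_i$ lying in $H\setminus\{0\}$ is $s_i(s_i-1)+(k_i-s_i)(k_i-s_i-1)$. Summing over $i$ and using that $\Delta\mathcal F$ covers every nonzero element of $H$ exactly $\lambda$ times (and $|H\setminus\{0\}| = \lambda-1$ since $|H| = \tfrac12 o(G) = \lambda$), I get
$$\sum_{i=1}^t\bigl(s_i(s_i-1)+(k_i-s_i)(k_i-s_i-1)\bigr) = \lambda(\lambda-1).$$
Expanding the left side gives $\sum_i\bigl(2s_i^2 - 2s_ik_i + k_i^2 - k_i\bigr) = \lambda(\lambda-1)$, and substituting the HPDF identities $\sum k_i = 2\lambda$ and $\sum k_i^2 = \lambda(2\lambda+1)$ from Proposition \ref{elementary} reduces this to $\sum_i(2s_i^2 - 2s_ik_i) = -\lambda^2$, i.e.\ $\sum_i 2s_i(k_i-s_i) = \lambda^2$, which is the second identity.

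For the first identity, the cleanest route is a parallel count of differences landing in the nontrivial coset $G\setminus H$: a difference $x-y$ with $x,y\in B_i$ lies outside $H$ exactly when $x,y$ are on opposite sides, contributing $2s_i(k_i-s_i)$ such differences (ordered pairs with one endpoint in each part). Since $\Delta\mathcal F$ covers each of the $\lambda$ elements of $G\setminus H$ exactly $\lambda$ times, this recovers $\sum_i 2s_i(k_i-s_i) = \lambda^2$ again — so I should instead get the first identity more directly. The neatest way: apply the character of $G$ with kernel $H$. Writing $\epsilon_i = s_i - (k_i - s_i) = 2s_i - k_i$ for the character sum over $B_i$, the fact that $\mathcal F$ is a PDF of index $\lambda$ with $G = \bigsqcup B_i$ forces $\bigl(\sum_i \epsilon_i\bigr)$-type relations; concretely, summing the nontrivial character over all of $\Delta\mathcal F = \lambda(G\setminus\{0\})$ gives $\sum_i \epsilon_i^2 - \sum_i k_i = \lambda\cdot(-1)$, and since $\epsilon_i \equiv k_i \pmod 2$ this with $\sum k_i = 2\lambda$ yields $\sum_i \epsilon_i^2 = 2\lambda - \lambda = \lambda$. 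Then $\sum_i(2s_i - k_i)^2 = \lambda$; combined with $\sum_i 2s_i(k_i - s_i) = \lambda^2$ — note $(2s_i-k_i)^2 = k_i^2 - 4s_i(k_i-s_i)$ — we get $\lambda = \sum k_i^2 - 2\lambda^2 = \lambda(2\lambda+1) - 2\lambda^2 = \lambda$, a tautology; so the character approach alone gives only the quadratic identity, and the linear identity $\sum s_i = \lambda$ genuinely requires the extra input that $\bigsqcup B_i = G$, whence $\bigsqcup(B_i\cap H) = H$ and so $\sum_i s_i = |H| = \lambda$.

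So the final structure I would write is: (1) note $|H| = o(G)/2 = \lambda$, hence $\sum_i s_i = |H| = \lambda$ immediately from the partition property — this disposes of the first identity with essentially no computation; (2) count differences of $\mathcal F$ inside $G\setminus H$ two ways, getting $\sum_i 2s_i(k_i - s_i) = \lambda\cdot|G\setminus H| = \lambda^2$ directly (each nonzero element — indeed every element — of the coset $G\setminus H$ is covered $\lambda$ times, and $|G\setminus H| = \lambda$); done. The main thing to be careful about is the bookkeeping in step (2): whether to count ordered or unordered pairs, and that $0\notin G\setminus H$ so all $\lambda$ elements of the coset are "nonzero" and thus covered exactly $\lambda$ times — this is where the Hadamard condition $o(G) = 2\lambda$ does the real work, since it makes $|G\setminus H| = \lambda$. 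There is no serious obstacle; the proposition is a direct coset-counting argument, and the only temptation to avoid is over-engineering it with characters when simple partition counting suffices.
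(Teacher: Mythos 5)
Your final argument---$\sum_i s_i=|H|=\lambda$ because the sets $B_i\cap H$ partition $H$, and $\sum_i 2s_i(k_i-s_i)=\lambda\cdot|G\setminus H|=\lambda^2$ by counting the differences of each $B_i$ that fall in the nontrivial coset---is exactly the paper's proof, and it is correct. The detours through counting inside $H$ and through characters are harmless (the former is a valid alternative derivation of the second identity using Proposition \ref{elementary}), but the structure you settle on is the one the paper uses.
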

\begin{proof}
	For $i=1,\dots,t$, set $B'_i=B_i\cap H$ and $B''_i=B_i\setminus H$ so that $|B'_i|=s_i$ and $|B''_i|=k_i-s_i$.
	The first identity follows from the fact that the $B_i$'s partition $H$ which has order $\lambda$.
	Now note that we have
	$$\Delta B_i=\Delta B'_i   \ \cup \ \Delta B''_i \ \cup \ (B'_i - B''_i) \ \cup \ (B''_i-B_i).$$ 
	Also note that $\Delta B'_i \ \cup \ \Delta B''_i$ is a multisubset of $H$ and that both $B'_i - B''_i$ and $B''_i - B'_i$ are multisubsets of $G\setminus H$ 
	of size $|B'_i|\cdot|B''_i|=s_i(k_i-s_i)$.
	Thus we can say that $\Delta B_i$ has exactly $2s_i(k_i-s_i)$ elements in $G\setminus H$. 
	Then, considering that $\Delta{\cal F}$ covers every element of $G\setminus H$ exactly $\lambda$ times, we conclude that we have
	$\sum_{i=1}^t2s_i (k_i - s_i) =\lambda\cdot|G\setminus H|=\lambda^2$, i.e., the second identity holds.
\end{proof}
As a consequence we have the following.
\begin{corollary}\label{Diophantus} 
	If there exists a $(G, [k_1, \ldots, k_t], \lambda)$-HPDF and $G$ has a  subgroup of index $2$, then
	the diophantine system
	$$
	\left \{ \begin{array}{rcrcl}
	x_1&+ \ ... \ +& x_t&=&\lambda\\
	2x_1(k_1-x_1)&+ \ ... \ +&2x_t(k_t-x_t)&=&\lambda^2
	\end{array}
	\right.
	$$
	has a solution $(s_1,\dots,s_t)$ with $0\leq s_i\leq k_i$ for each $i$.
\end{corollary}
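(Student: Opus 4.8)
The plan is to deduce Corollary \ref{Diophantus} directly from Proposition \ref{index2}, which already does essentially all of the work. Let $\mathcal{F}=\{B_1,\dots,B_t\}$ be a $(G,[k_1,\dots,k_t],\lambda)$-HPDF and suppose $G$ contains a subgroup $H$ of index $2$. Since $o(G)=2\lambda$, such an $H$ has order $\lambda$ (and $|G\setminus H|=\lambda$ as well). For each $i$ put $s_i=|B_i\cap H|$; then trivially $0\le s_i\le |B_i|=k_i$, so the tuple $(s_1,\dots,s_t)$ already satisfies the range constraint demanded in the statement.

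It remains only to check that $(s_1,\dots,s_t)$ solves the displayed diophantine system, and this is exactly the content of Proposition \ref{index2}: the two identities
$$
s_1+\cdots+s_t=\lambda,\qquad 2s_1(k_1-s_1)+\cdots+2s_t(k_t-s_t)=\lambda^2
$$
are precisely the statement that $(x_1,\dots,x_t)=(s_1,\dots,s_t)$ satisfies the system with $x_i=s_i$. So the corollary follows immediately.

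I do not anticipate any real obstacle here, since the corollary is essentially a restatement of Proposition \ref{index2} phrased in terms of solvability of a diophantine system; the only thing to be careful about is to remark that the $s_i$ automatically obey $0\le s_i\le k_i$, because each $B_i\cap H$ is a subset of $B_i$. One could optionally add the observation that this gives a genuinely useful nonexistence test: if, for a given admissible parameter set $[k_1,\dots,k_t]$ with $\lambda=\tfrac12\sum k_i$, the system above has no solution in the box $\prod_i\{0,1,\dots,k_i\}$, then no HPDF with those parameters can exist in any group possessing a subgroup of index $2$ — in particular in no group of order $2\lambda$ that is not of odd order times a $2$-group with cyclic Sylow $2$-subgroup of order $2$, etc. But for the proof of the corollary itself, nothing beyond quoting Proposition \ref{index2} is needed.

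\begin{proof}
	Let ${\cal F}=\{B_1,\dots,B_t\}$ be a $(G, [k_1, \ldots, k_t], \lambda)$-HPDF and assume that $G$ has a subgroup $H$ of index $2$.
	For $i=1,\dots,t$ set $s_i=|B_i\cap H|$. Since $B_i\cap H\subseteq B_i$ and $|B_i|=k_i$, we have $0\le s_i\le k_i$ for each $i$.
	By Proposition \ref{index2} the identities $s_1+\cdots+s_t=\lambda$ and $2s_1(k_1-s_1)+\cdots+2s_t(k_t-s_t)=\lambda^2$ hold,
	which is precisely the statement that $(x_1,\dots,x_t)=(s_1,\dots,s_t)$ is a solution of the given diophantine system.
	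This proves the assertion.
\end{proof}
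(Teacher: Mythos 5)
Your proof is correct and is exactly the argument the paper intends: the corollary is stated as an immediate consequence of Proposition \ref{index2}, with $s_i=|B_i\cap H|$ supplying the solution and the bounds $0\le s_i\le k_i$ being automatic. Nothing further is needed.
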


As application of the above corollary one can see that none of these $K$, though admissible, can be the multiset of block-sizes of a HPDF:
$$ [ 50, 20, 5, 1 ];\quad [ 52, 23, 2, 1, 1, 1 ];\quad [ 73, 38, 3, 2 ];$$
$$\quad [ 77, 28, 8, 3 ];\quad [ 79, 31, 7, 3 ];\quad [ 81, 21, 16, 1, 1 ];\quad [ 104, 35, 14, 3 ].$$

As a matter of fact we have to admit that the admissible $K$ which are ruled out by Corollary \ref{Diophantus}
do not appear so many. On the other hand Proposition \ref{index2} has been useful to limit our 
computer search for HPDFs of small orders.

\section{New HPDFs}\label{new}

In this section we present HPDFs with the three new parameter sets
$$( 24, [1^3,2^2,17], 12),\quad\quad (36,  [3,9, 24], 18)\quad {\rm and}\quad (40, [1, 3, 9, 27], 20).$$
We will use, in particular, some dihedral groups and dicyclic groups of small orders.
We recall that the dihedral group of order $2n$, denoted $D_{2n}$, is the group with defining relations
$$\langle x, y \ | \ x^{n}=1; \ y^2=1; \ yx^i=x^{-i}y\rangle$$

We also recall that the dicyclic group of order $4n$, denoted $Q_{4n}$, is the group with defining relations
$$\langle x, y \ | \ x^{2n}=1; \ y^2=x^n; \ yx^i=x^{-i}y\rangle$$

\subsection{$( 24, [1^3,2^2,17], 12)$-HPDFs}\label{24}
We found an example of a $( 24, [1^3,2^2,17], 12)$-HPDF in each of the following groups: $C_3 \rtimes C_8$, $SL(2,3)$ and $C_3 \times D_8$.

\noindent
\textbf{$G=C_3 \rtimes C_8$
}

{\leftskip=0.5cm

This is the semidirect product of $C_3$ by $C_8$ with defining relations
$$
C_3 \rtimes C_8 = \langle a, b \, | \,   a^8 = b^3 = 1, ab^{-1} = ba \rangle
$$
Thus the elements of $G$ are of the form $a^ib^j$ with $0\leq i\leq 7$ and $0\leq j\leq 2$.
The difference (even though we should say ``ratio" since we are in multiplicative notation) 
between two elements $a^{i_1}b^{j_1}$ and $a^{i_2}b^{j_2}$ is given by
\begin{equation}\label{semidirectrule}
(a^{i_1}b^{j_1})(a^{i_2}b^{j_2})^{-1}=a^{i_1-i_2}b^{(-1)^{i_2}(j_1-j_2)}
\end{equation}
Let ${\cal F}=\{B_1,B_2,B_3,B_4,B_5,B_6\}$ be the partition of $G$ defined as follows:
$$ B_1=\{ 1, a, a^2, a^3, a^4, a^6, a^7, b, ab, a^3b, a^4b, a^5b, a^6b, b^2, ab^2, a^2b^2, a^4b^2 \};$$
$$B_2 =\{ a^3b^2\};\quad B_3 = \{ a^5b^2\};\quad B_4 = \{ a^7b^2 \};$$
$$B_5 =\{ a^5, \ a^2b \};\quad B_6 =\{ a^7b, \ a^6b^2 \}.$$
Using (\ref{semidirectrule}) it is straightforward to check that $\cal F$ is a $(G,[1^3,2^2,17],12)$-HPDF.

}

\noindent
\textbf{$G=SL(2,3)$}

{\leftskip=0.5cm
This is the 2-dimensional special linear group over $\Z_3$. Its elements are the $2\times2$ matrices
with elements in $\Z_3$ and determinant equal to 1. 
Let ${\cal F}=\{B_1,B_2,B_3,B_4,B_5,B_6\}$ be the partition of $G$ defined as follows:
$$B_1=\biggl{\{}\begin{pmatrix}2&1\cr 0&2\end{pmatrix}\biggl{\}};\quad\quad
B_2=\biggl{\{}\begin{pmatrix}1&2\cr 0&1\end{pmatrix}\biggl{\}};\quad\quad B_3=\biggl{\{}\begin{pmatrix}0&2\cr1&2\end{pmatrix}\biggl{\}};$$
$$B_4=\biggl{\{}\begin{pmatrix}0&2\cr1&0\end{pmatrix}, \ \begin{pmatrix}1&1\cr 1&2\end{pmatrix}\biggl{\}};\quad\quad\quad
B_5=\biggl{\{}\begin{pmatrix}2&1\cr2&0\end{pmatrix}, \ \begin{pmatrix}2&2\cr 0&2\end{pmatrix}\biggl{\}};$$
$$B_6=G \ \setminus \ (B_1 \ \cup \ B_2 \ \cup \ B_3 \ \cup \ B_4 \ \cup \ B_5).$$

It is straightforward to check that $\cal F$ is a $(G,[1^3,2^2,17],12)$-HPDF.

}

\noindent
\textbf{$G=\Z_3 \times D_8$}

{\leftskip=0.5cm
The reader can easily recognize that the partition of $G$ into the blocks listed below is a 
$(G,[1^3,2^2,17],12)$-HPDF.
$$B_1=\{(0,x^2)\};\quad B_2=\{(2,xy)\};\quad B_3=\{(2,x^3y)\};$$
$$B_4=\{(1,x^3), \ (2,x^3)\};\quad\quad B_5=\{(1,y), \ (2,x^2y)\};$$
$$B_6=G \ \setminus \ (B_1 \ \cup \ B_2 \ \cup \ B_3 \ \cup \ B_4 \ \cup \ B_5).$$

}

\subsection{$(36,  [3,9,24], 18)$-HPDFs}\label{36}
We found an example ${\cal F}=\{A,B,C\}$ of a $(36,  [3,9,24], 18)$-HPDF in the groups
$$\Z_6 \times \Z_6,\quad\quad \Z_{3} \times \Z_{12},\quad\quad \Z_3 \times \mbox{Q}_{12},\quad\quad D_6 \times D_6, \quad\quad\Z_6 \times D_6,$$ 
$$\Z_3 \times A_4,\quad\quad\quad\Z_3 \rtimes \mbox{Q}_{12},\quad\quad\quad\Z_3^2 \rtimes \Z_4,\quad\quad\quad\Z_2 \times \Z_3 \rtimes D_6.$$
We present our example for each of the first five.

\noindent
\textbf{$G=\Z_6 \times \Z_6$
}

\smallskip
{\leftskip=0.5cm
$A=\{(1,1),(1,3),(1,5)\};$

\smallskip
$B=\{\{0,2),(0,3),(1,4),(2,0),(2,5),(3,4),(4,1),(4,4),(5,4)\};$

\smallskip
$C=G\setminus(A \ \cup \ B).$

}

\noindent
\textbf{$G=\Z_{3} \times \Z_{12}$
}

{\leftskip=0.5cm
\smallskip
$A=\{(1,1),(1,5),(1,9)\};$

\smallskip
$B=\{\{0,2),(0,3),(0,4),(1,2),(1,8),(1,11),(2,0),(2,2),(2,7)\};$

\smallskip
$C=G\setminus(A \ \cup \ B).$

}

\smallskip

\noindent
\textbf{$G=\Z_3 \times \mbox{Q}_{12}$}

{\leftskip=0.5cm
\smallskip
$A=\{(0,xy),(1,xy),(2,xy)\};$

\smallskip
$B=\{\{1,1),(0,x^3),(0,x^2),(2,y),(1,x^5),(2,x^4y),(2,x^4),(2,x^2y),(2,x)\};$

\smallskip
$C=G\setminus(A \ \cup \ B).$

}

\noindent
\textbf{$G=D_6 \times D_6$}

{\leftskip=0.5cm
\smallskip
$A=\{(y,xy),(xy,xy),(x^2y,xy)\};$

\smallskip
$B=\{(1,x^2y),(x,y),(x^2,1),(x^2,x),(x^2,x^2),(x^2,xy),(y,1),(xy,x^2),(x^2y,x)\};$

\smallskip
$C=G\setminus(A \ \cup \ B).$

}

\smallskip
\noindent \textbf{$G=\Z_6 \times D_6$}

{\leftskip=0.5cm
\smallskip
$A=\{(1,xy),(3,xy),(5,xy)\};$

\smallskip
$B=\{\{0,x),(1,x^2),(2,1),(3,1),(4,x^2),(4,y),(4,xy),(4,x^2y),(5,x)\};$

\smallskip
$C=G\setminus(A \ \cup \ B).$

}

\subsection{A cyclic $(40, [1, 3, 9, 27], 20)$-HPDF}\label{40}

This is our unique example of a HPDF in a cyclic group. Also, it has maximum order among the few non-elementary HPDFs 
that are known at this moment. Thus it is surprising that this is also the unique example that we have been able to get by hand
without any use of the computer. The idea was the following. Start from any cyclic $(40,13,4)$-DS, that is a {\it Singer} difference set. 
Such a difference set is available in the literature. One is, for instance, the following (see \cite{JPS06}, page 427):
$$D=\{1,2,3,5,6,9,14,15,18,20,25,27,35\}$$
By Corollary \ref{complementary}, $\overline{D}:=\Z_{40}\setminus D$ is a $(40,27,18)$-DS. Then, if we are able to partition $D$ into three subsets $A$, $B$, $C$ of sizes $1$, $3$, $9$
in such a way that $\Delta A \ \cup \ \Delta B \ \cup \ \Delta C$ is all $\Z_{40}\setminus\{0\}$ twice, it is obvious that $${\cal F}=\{A,B,C,\overline{D}\}$$ would be a
$(40, [1,3,9,27], 20)$-HPDF. Well, the desired partition of $D$ has $A$, $B$, $C$ as follows:
$$A=\{13\};\quad B=\{5,15,25\};\quad C=\{1,2,3,6,9,14,18,20,27\}.$$
This is readily seen from the {\it difference tables} of $B$ and $C$ below (of course $\Delta A$ is empty).

\small
\medskip
\begin{center}
	\begin{tabular}{|l|c|r|c|r|c|r|c|r|c|r|c|r|}
		\hline {$$} & 5 & 15 & 25    \\
		\hline
		\hline $5$ & $-$ & \bf30 & \bf20  \\
		\hline $15$ & $\bf10$ & $-$ & \bf30  \\
		\hline $25$ & $\bf20$ & \bf10 & $-$  \\
		\hline
		\end{tabular}
		
		\medskip
		
		\begin{tabular}{|l|c|r|c|r|c|r|c|r|c|r|c|r|}
		\hline {$$} & 1 & 2 & 3 & 6 & 9 & 14 & 18 & 20 & 27   \\
		\hline
		\hline $1$ & $-$ & \bf39 & \bf38 & \bf35 & \bf32 & \bf27 & \bf23 &\bf21 & \bf14  \\
		\hline $2$ & \bf1 & $-$ & \bf39 & \bf36 & \bf33 & \bf28 & \bf24 &\bf22 & \bf15  \\
		\hline $3$ & \bf2 & \bf1 & $-$ & \bf37 & \bf34 & \bf29 & \bf25 &\bf23 & \bf16  \\
		\hline $6$ & \bf5 & \bf4 & \bf3 & $-$ & \bf37 & \bf32 & \bf28 &\bf26 & \bf19  \\
		\hline $9$ & $\bf8$ & \bf7 & \bf6 & \bf3 & $-$ & \bf35 & \bf31 &\bf29 & \bf22  \\
		\hline $14$ & $\bf13$ & \bf12 & \bf11 & \bf8 & \bf5 & $-$ & \bf36 &\bf34 & \bf27  \\
		\hline $18$ & $\bf17$ & \bf16 & \bf15 & \bf12 & \bf9 & \bf4 & $-$ &\bf38 & \bf31  \\
		\hline $20$ & $\bf19$ & \bf18 & \bf17 & \bf14 & \bf11 & \bf6 & \bf2 & $-$ & \bf33  \\
		\hline $27$ & $\bf26$ & \bf25 & \bf24 & \bf21 & \bf18 & \bf13 & \bf9 &\bf7 & $-$  \\
		\hline
	\end{tabular}\quad\quad\quad
\end{center}

\normalsize
It is worth observing that $C$ is a $(10,4,9,2)$ {\it relative difference set}. It means that
$\Delta C$ is precisely twice $\Z_{10\cdot4}\setminus N$ where $N$ is the subgroup of $\Z_{10\cdot4}$
of order 4 (see \cite{JPS06}).

\section{New infinite families of PDFs}
Most composition constructions for PDFs make use of {\it difference matrices} 
and lead to PDFs whose block sizes belong, almost all, to the set of block sizes of the
component PDFs (see \cite{BYW, LWG}).
Buratti \cite{Bur19} motivated the introduction of HPDFs showing that any single $(v,K,\lambda)$-HPDF 
is the {\it ancestor} of an infinite series of PDFs where, apart from one special block of size $2\lambda$, 
the size of every other block is the double of some $k\in K$. We are going to recall the main application 
of his construction and then we determine the {\it descendants} of our new examples.

Following \cite{BBGRT}, the maximal prime power factors of a given integer $v$ will be called
{\it components} of $v$, and $\F_v$ will denote the ring which is the direct product of the fields 
whose orders are the components of $v$. Thus, for instance, $\F_{63}=\F_9\times\F_7$.

\begin{theorem}\cite{Bur19}
	If there exists a $(G, [k_1, \ldots , k_t], \lambda)$-HPDF and all the components of $2n+1$ are greater than $2\cdot\max\{k_1,\dots,k_t\}$, 
	then there exists a $(2\lambda(2n + 1), [(2k_1)^n, \ldots,(2k_t)^n, 2\lambda], 2\lambda)$-PDF in $G\times \F_{2n+1}$.
\end{theorem}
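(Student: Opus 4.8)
The construction is a classical "direct product with a field/ring" argument, so I would set up the candidate PDF explicitly and then verify the difference-covering condition by a counting identity. Write $R=\F_{2n+1}$, which by hypothesis splits as a direct product of fields each of order greater than $2\max\{k_1,\dots,k_t\}$; in particular $R^*$ acts on $R$ and every nonzero $r\in R$ has a well-defined (componentwise) set of square roots, but what I really need is that the nonzero elements of each component field are numerous enough to "host" the difference lists of the doubled blocks. Denote by $\mathcal F=\{B_1,\dots,B_t\}$ the given HPDF in $G$. The plan is to build $t\cdot n$ "small" blocks of the form $B_i\times\{c\}$-like translates spread over the fibres of $G\times R$, together with one distinguished block of size $2\lambda$ sitting over the zero fibre, and to check that the whole family partitions $G\times R$ and has index $2\lambda$.

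**Building the blocks.** Concretely, for each $i$ fix $2\max\{k_j\}$ — actually $2k_i$ suffices — nonzero elements of $R$; since every component of $R$ has order $>2\max\{k_j\}$, one can choose, inside each component field $\F_q$, a set $S$ of $2\max\{k_j\}$ distinct nonzero elements, and then glue. The sharper idea from Buratti's construction is: in each component $\F_q$ one has the additive structure, and one uses the fact that for $k\le\frac{q-1}{2}$ a set of $k$ elements and its negatives are disjoint, so a block of the form $\{(b,s_b):b\in B_i\}\cup\{(b,-s_b):b\in B_i\}$ of size $2k_i$ has difference list that, restricted to the $R$-coordinate, hits $2s_b$ and $s_b-s_{b'}$ and $-s_b-s_{b'}$, none of which vanish. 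One lets $c$ range over a transversal so that these $n$ blocks (for fixed $i$) cover, in the $R$-coordinate, all of $R^*$ suitably; and the remaining fibre over $0\in R$ is filled by a single block $G\times\{0\}$-modified-to-size-$2\lambda$... this is exactly where I must be careful, so let me flag it.

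**The main obstacle.** The hard part is the bookkeeping that makes the difference list come out to be \emph{exactly} $2\lambda$ times every nonzero element of $G\times R$ — i.e. pinning down precisely which elements $s_b\in R$ to attach to which $b\in B_i$ in which fibre, so that (a) the family genuinely partitions $G\times R$, and (b) differences with zero $R$-coordinate are covered $2\lambda$ times (this should come down to the original HPDF relations $\sum k_i=2\lambda$ and $\sum k_i(k_i-1)=\lambda(2\lambda-1)$ together with the behaviour of the size-$2\lambda$ block), while differences with nonzero $R$-coordinate $r$ are covered $2\lambda$ times via a transitivity/regularity argument on the index-$2$-free component structure of $R$ (here the hypothesis that each component exceeds $2\max k_i$ is what guarantees no collisions and enough room). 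I expect the cleanest route is: (i) reduce to the case $R$ a single field $\F_q$ with $q>2\max k_i$ and then handle the general ring componentwise (differences in $G\times R$ decompose as differences in $G\times\F_{q_1}\times\cdots$), (ii) within $\F_q$ invoke the standard device that a $(q,K',\mu)$-type "relative-difference-family over the subgroup of order..." argument or, more elementarily, a direct count using that $\{s,-s\}$ are distinct. I would present the single-field case in full detail and then say "the general case follows by applying the above to each component and using that $\Delta$ of a product block is the union over coordinates" — that sentence is routine once the one-field case is nailed, so the essay-length effort all goes into the one-field counting.

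**Sanity check.** Finally I would verify the parameters are consistent: the new family has $tn$ blocks of sizes $2k_i$ (each appearing $n$ times) plus one of size $2\lambda$, so the total size is $n\sum 2k_i+2\lambda=2\lambda\cdot 2n+2\lambda=2\lambda(2n+1)=|G\times R|$, and the index-equation $\sum(\text{block})(\text{block}-1)=\mu(|G\times R|-1)$ forces $\mu=2\lambda$ after substituting $\sum k_i=2\lambda$ and $\sum k_i^2=\lambda(2\lambda+1)$; this arithmetic check is exactly Proposition~\ref{elementary} applied in reverse and reassures me the only real content is the existence of the attaching data, i.e. the obstacle named above.
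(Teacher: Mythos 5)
The paper does not actually prove this theorem --- it is quoted verbatim from \cite{Bur19}, so there is no in-paper argument to match yours against. Judged on its own terms, your proposal correctly identifies the shape of Buratti's construction (blocks of the form $\{(b,s_b),(b,-s_b):b\in B_i\}$ over the fibres of $G\times\F_{2n+1}$, plus one block of size $2\lambda$ over the zero fibre, with the hypothesis on the components of $2n+1$ guaranteeing enough room in each field), but it is not a proof: the entire combinatorial content is contained in the step you yourself flag as ``the main obstacle'' and then leave undone. Concretely, you never specify the attaching data $s_b^{(j)}\in\F_{2n+1}$, and without it none of the three required verifications goes through: (a) that the family partitions $G\times\F_{2n+1}$ forces, for each fixed $b\in G$, the $2n$ elements $\pm s_b^{(j)}$ ($j=1,\dots,n$) to be exactly $\F_{2n+1}\setminus\{0\}$; (b) your claim that the $R$-coordinates $s_b-s_{b'}$ and $-s_b-s_{b'}$ ``none of which vanish'' is false for an arbitrary choice --- if $s_b^{(j)}=\pm s_{b'}^{(j)}$ for distinct $b,b'$ in the same block, the pair $(h,0)$ with $h=b-b'\neq0$ picks up extra multiplicity beyond the $2\lambda$ already contributed by the special block $G\times\{0\}$, and the PDF condition fails; (c) for $(h,r)$ with $h\neq0$ and $r\neq0$ one must show that the $4n$ values $\pm s_b^{(j)}\mp s_{b'}^{(j)}$, summed over $j$, cover $\F_{2n+1}^*$ exactly twice for each of the $\lambda$ pairs $(b,b')$ with $b-b'=h$, and this is precisely where the multiplicative structure of the component fields (cosets of $\{1,-1\}$ in $\F_q^*$) and the bound $q>2\max k_i$ are used. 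Saying ``I would present the single-field case in full detail'' is not the same as presenting it; as submitted, the proposal is an accurate table of contents for the proof rather than the proof itself.

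A smaller point: your ``sanity check'' paragraph only verifies that the claimed parameters are arithmetically consistent (via $\sum k_i=2\lambda$ and $\sum k_i^2=\lambda(2\lambda+1)$); it carries no weight toward existence and should not be presented as ``reassurance that the only real content is the attaching data'' --- that real content is exactly what is missing.
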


Applying the above theorem using the new examples of HPDFs obtained in Section \ref{new} we obtain the following results.

\begin{corollary}
	If all the components of $2n+1$ are greater than $34$, then there exists a 
	$(48n + 24, [34^n,4^{2n},2^{3n},24], 24)$-PDF in $G\times\F_{2n+1}$ for each of the three
	groups $G$ considered in \ref{24}.
\end{corollary}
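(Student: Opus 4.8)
The plan is to apply the cited theorem of Buratti (the one stated just above the corollary) directly, with the specific input parameters coming from Section~\ref{24}. First I would recall that in Section~\ref{24} we exhibited, in each of the three groups $G\in\{C_3\rtimes C_8,\ SL(2,3),\ C_3\times D_8\}$, a $(G,[1^3,2^2,17],12)$-HPDF; thus in the notation of the theorem we have $\lambda=12$ and the multiset of block sizes $[k_1,\dots,k_t]=[1,1,1,2,2,17]$, so that $\max\{k_1,\dots,k_t\}=17$ and $2\cdot\max\{k_1,\dots,k_t\}=34$.

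Next I would simply substitute these values into the conclusion of the theorem. The hypothesis ``all components of $2n+1$ are greater than $2\cdot\max\{k_i\}$'' becomes ``all components of $2n+1$ are greater than $34$,'' which is exactly the hypothesis of the corollary. The theorem then yields a PDF in $G\times\F_{2n+1}$ of order $2\lambda(2n+1)=24(2n+1)=48n+24$, with block-size multiset $[(2k_1)^n,\dots,(2k_t)^n,2\lambda]$. Unwinding this multiset: the three blocks of size $1$ contribute $(2\cdot1)^n=2^n$ repeated three times, i.e.\ $2^{3n}$; the two blocks of size $2$ contribute $(2\cdot 2)^n=4^n$ repeated twice, i.e.\ $4^{2n}$; the block of size $17$ contributes $(2\cdot17)^n=34^n$; and the special block has size $2\lambda=24$. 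Collecting these gives exactly $[34^n,4^{2n},2^{3n},24]$, and the index is $2\lambda=24$, matching the statement.

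Finally I would note that the argument is uniform in $G$: the theorem's hypothesis and conclusion depend on the HPDF only through $\lambda$ and the block-size multiset, both of which are identical for all three groups, so the single application handles all three cases simultaneously, producing the PDF in $G\times\F_{2n+1}$ for each. There is essentially no obstacle here — the corollary is a pure specialization of the theorem — so the only thing that needs care is the bookkeeping of the exponents in the block-size multiset (making sure $1\mapsto 2$, $2\mapsto 4$, $17\mapsto 34$ with the correct multiplicities $3,2,1$) and confirming $2\lambda=24$; once that is checked the corollary follows immediately.
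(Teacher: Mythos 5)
Your proposal is correct and is exactly the argument intended by the paper, which presents this corollary as an immediate specialization of Buratti's theorem with $\lambda=12$, block sizes $[1^3,2^2,17]$, and $2\cdot\max\{k_i\}=34$. The bookkeeping of the doubled block sizes and their multiplicities, and the observation that the argument is uniform over the three groups, are all as the paper intends.
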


The first $n$ for which the above corollary can be applied is 18.
In this way one gets a $(984, [34^{18},4^{36},2^{54},24], 24)$-PDF in $G\times \F_{37}$.

\begin{corollary}
	If all the components of $2n+1$ are greater than $48$, then there exists a 
	$(72n + 36, [6^n,18^n,48^n,36], 36)$-PDF  in $G\times\F_{2n+1}$ for each of the nine
	groups $G$ considered in \ref{36}.
\end{corollary}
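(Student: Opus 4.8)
The statement to prove is the final Corollary: applying the Buratti construction theorem to the nine $(36,[3,9,24],18)$-HPDFs from Section~\ref{36} yields a $(72n+36,[6^n,18^n,48^n,36],36)$-PDF in $G\times\F_{2n+1}$ whenever all components of $2n+1$ exceed $48$.

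\bigskip

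The plan is to simply invoke the quoted Theorem (Buratti's construction) with the explicit parameters of the HPDFs established in Section~\ref{36}. First I would record that Section~\ref{36} produces, for each of the nine listed groups $G$, a $(G,[k_1,k_2,k_3],\lambda)$-HPDF with $(k_1,k_2,k_3)=(3,9,24)$ and $\lambda=18$; in particular $\max\{k_1,k_2,k_3\}=24$, so $2\cdot\max\{k_1,k_2,k_3\}=48$. Thus the hypothesis ``all components of $2n+1$ are greater than $2\cdot\max\{k_1,\dots,k_t\}$'' in the Theorem is exactly the stated hypothesis that all components of $2n+1$ exceed $48$. Next I would substitute into the conclusion of the Theorem: it guarantees a $(2\lambda(2n+1),[(2k_1)^n,(2k_2)^n,(2k_3)^n,2\lambda],2\lambda)$-PDF in $G\times\F_{2n+1}$. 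With $\lambda=18$ this is a $(36(2n+1),[6^n,18^n,48^n,36],36)$-PDF, and since $36(2n+1)=72n+36$ this is precisely the claimed parameter set. Running this argument once for each of the nine groups $G$ from Section~\ref{36} gives the full statement.

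\bigskip

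There is essentially no obstacle here: the corollary is a direct specialization of the cited Theorem, and the only things to check are the bookkeeping substitutions $2k_1=6$, $2k_2=18$, $2k_3=48$, $2\lambda=36$, and $2\lambda(2n+1)=72n+36$, together with the observation that the threshold $2\cdot\max\{k_i\}$ equals $48$. If anything could be called ``the hard part,'' it is only making sure that each of the nine constructions in Section~\ref{36} really is a valid HPDF with the stated parameters --- but that verification belongs to Section~\ref{36} itself (and is carried out there, e.g.\ via the difference computations), so for the present corollary it may be assumed. Hence the proof is a one-line deduction repeated verbatim across the nine groups, and I would present it as such.
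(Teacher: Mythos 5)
Your proposal is correct and is exactly the argument the paper intends: the corollary is a direct specialization of Buratti's theorem to the $(G,[3,9,24],18)$-HPDFs of Section 6.2, with the substitutions $2\cdot\max\{3,9,24\}=48$, $(2k_1,2k_2,2k_3)=(6,18,48)$, $2\lambda=36$, and $2\lambda(2n+1)=72n+36$. The paper gives no separate proof precisely because the deduction is this immediate, so there is nothing to add.
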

The first $n$  for which the above corollary can be applied is 24. 
In this way one gets a $(1764,$ $[6^{24},18^{24},48^{24},36], 36], 36)$-PDF. 
\begin{corollary}
	If all the components of $2n+1$ are greater than $54$, 
	then there exists a $(80n + 40, [2^n,6^n,18^n,54^n,40], 40)$-PDF in $\Z_{40}\times \F_{2n+1}$.
\end{corollary}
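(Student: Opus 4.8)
The plan is to apply the Buratti theorem quoted above directly to the cyclic $(40,[1,3,9,27],20)$-HPDF constructed in Section \ref{40}, just as the two preceding corollaries do for the other new examples. Here $G=\Z_{40}$, the list of block sizes is $[k_1,k_2,k_3,k_4]=[1,3,9,27]$, and the index is $\lambda=20$, so $2\lambda=40$. First I would record that $\max\{k_1,k_2,k_3,k_4\}=27$, hence $2\cdot\max\{k_1,\dots,k_t\}=54$; this is exactly the threshold that appears in the hypothesis ``all the components of $2n+1$ are greater than $54$''.

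Next I would substitute into the conclusion of the theorem. The doubled block sizes are $2k_1=2$, $2k_2=6$, $2k_3=18$, $2k_4=54$, each appearing with multiplicity $n$, together with one special block of size $2\lambda=40$; the resulting index is again $2\lambda=40$, and the group is $G\times\F_{2n+1}=\Z_{40}\times\F_{2n+1}$, of order $2\lambda(2n+1)=40(2n+1)=80n+40$. Assembling these gives precisely the asserted $(80n+40,[2^n,6^n,18^n,54^n,40],40)$-PDF in $\Z_{40}\times\F_{2n+1}$, so the statement follows immediately from the theorem and the existence of the HPDF established in Section \ref{40}.

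There is essentially no obstacle: the only content is checking that the HPDF produced in Section \ref{40} genuinely has the claimed parameters (which was verified there via the displayed difference tables, with $\Delta A$ empty, $\Delta B$ and $\Delta C$ together covering $\Z_{40}\setminus\{0\}$ twice, and $\overline D$ a $(40,27,18)$-DS), and that the arithmetic of the substitution is correct. If one wanted to be fully self-contained one could also remark, as the text does after each corollary, that the smallest admissible $n$ is the first $n$ for which every prime-power component of $2n+1$ exceeds $54$; for instance $2n+1=55=5\cdot 11$ fails, so one must go further, the point being simply that the family is nonempty. But for the proof of the stated corollary nothing beyond the theorem and the Section \ref{40} construction is needed.
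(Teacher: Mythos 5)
Your proposal is correct and matches the paper's (implicit) argument exactly: the corollary is a direct instantiation of Buratti's theorem with the cyclic $(40,[1,3,9,27],20)$-HPDF of Section 6.3, where $2\cdot\max\{1,3,9,27\}=54$ gives the threshold and the substitution $2k_i$, $2\lambda=40$, $|G\times\F_{2n+1}|=40(2n+1)=80n+40$ yields the stated parameters. Nothing further is needed.
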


The first $n$  for which the above corollary can be applied is 29.  
In this way one gets a $(2360, [2^{29},6^{29},18^{29},54^{29},40], 40)$-PDF.

\section{A pair of open questions}

It is obvious that any $(v,k,\lambda)$ difference set $B$ with $v=2\lambda$
gives rise to a $(v,[1^{v-k},k],\lambda)$-HPDF consisting of $B$ 
and all possible singletons $\{g\}$ with $g\in G \setminus B$.
Thus, for our purposes, it is worth to look for difference sets with this property.
Let us call them {\it Pell difference sets}. 
The existence of a Pell $(v,k,\lambda)$-DS obviously requires that $k(k-1)=\lambda(2\lambda-1)$
and, by the Bruck-Ryser-Chowla theorem, that $k-\lambda$ is a square. On the other hand 
the latter condition is redundant. Indeed, as shown in the following, the first condition implies the second.
\begin{proposition}
	If $k$ and $\lambda$ are integers such that $k(k-1)=\lambda(2\lambda-1)$, then $k-\lambda$ is a square.
\end{proposition}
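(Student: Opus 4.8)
The plan is to set $d := k-\lambda$ and to show directly that $d$ is a perfect square. First I would substitute $k=\lambda+d$ into the hypothesis $k(k-1)=\lambda(2\lambda-1)$; expanding and cancelling the common term $\lambda^2-\lambda$ from both sides leaves $2\lambda d+d^2-d=\lambda^2$, and completing the square in $\lambda$ turns this into the clean identity
$$(2\lambda-k)^2=(\lambda-d)^2=2d^2-d=d(2d-1).$$
Hence $d(2d-1)$ is a perfect square.

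Next I would observe that $\gcd(d,2d-1)=1$, since any common divisor of $d$ and $2d-1$ divides their difference $2d-(2d-1)=1$. A product of two coprime \emph{nonnegative} integers can be a perfect square only if each factor is separately a perfect square; therefore, provided $d\ge0$, we conclude that $d=k-\lambda$ is a square, which is exactly the claim. It then remains to check that $d\ge0$, i.e.\ $k\ge\lambda$: in the situation of interest $k$ and $\lambda$ are the parameters of a difference set, so $k,\lambda\ge1$ (if one of them is $0$ the equation forces $k=\lambda=0$ and the assertion is trivial), and since $x\mapsto x(x-1)$ is strictly increasing on the positive integers while $\lambda(\lambda-1)\le\lambda(2\lambda-1)$, the equality $k(k-1)=\lambda(2\lambda-1)$ rules out $k<\lambda$.

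I do not expect a genuine obstacle: the whole argument is elementary. The only two points needing a little care are arranging the substitution so that one lands on the factored form $d(2d-1)$ rather than a messier quadratic, and noticing that the coprimality/square-factor step requires $d$ to be nonnegative — which is why the (otherwise innocuous) positivity of the difference set parameters has to be invoked.
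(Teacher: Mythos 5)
Your proof is correct, and it takes a genuinely different route from the paper's. The paper sets $d=\gcd(k,\lambda)$, writes $k=de$, $\lambda=df$ with $\gcd(e,f)=1$, extracts the relations $de-1=fg$ and $2df-1=eg$, and subtracts to land on $k-\lambda=d^2$; in particular it identifies the square root of $k-\lambda$ as $\gcd(k,\lambda)$. You instead complete the square to get the identity $(2\lambda-k)^2=d(2d-1)$ with $d=k-\lambda$, note $\gcd(d,2d-1)=1$, and invoke the standard fact that coprime nonnegative factors of a square are squares. Your version buys the extra conclusion that $2(k-\lambda)-1$ is also a perfect square, and it makes visible a subtlety that the paper glosses over: as literally stated for arbitrary integers the proposition is false (e.g.\ $k=-2$, $\lambda=2$ satisfy $k(k-1)=6=\lambda(2\lambda-1)$ but $k-\lambda=-4$), so the positivity of the difference-set parameters really is needed, and your monotonicity argument supplies it cleanly. (The paper's gcd proof silently assumes the favourable sign in its last step $k-\lambda=d(e-f)=d^2$, so your explicit treatment is an improvement rather than a detour.) One small slip: $\lambda=0$ does not force $k=0$, since $k=1$ also solves $k(k-1)=0$; but then $k-\lambda=1$ is a square, so the degenerate cases are still harmless.
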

\begin{proof}
	Set $d=\gcd(k,\lambda)$. Thus we have $k=de$ and $\lambda=df$ with $e$ and $f$ coprime integers.
	From the given equality we get $e(de-1)=f(2df-1)$ and then, considering that $\gcd(e,f)=1$, we necessarily 
	have $de-1=fg$ and $2df-1=eg$ for some integer $g$.
	Subtracting the second equality from the first one we get $d(e-2f)=g(f-e)$. We clearly have $\gcd(d,g)=1$ and hence 
	$d=f-e$ and $g=e-2f$. We conclude that $k-\lambda=d(e-f)=d^2$.
\end{proof}
Apart from the trivial $(4,3,2)$-DS, no other Pell difference set is known.
\begin{question}\label{Pell}
	Does there exist a Pell difference set of order $v>4$?
\end{question}
Here are the first possible triples $(v,k,\lambda)$ for 
which a non-trivial Pell $(v,k,\lambda)$ difference set may exist:

\smallskip
$\begin{array}{l}
(120, \ 85, \ 60) \\
(4060, \ 2871, \ 2030) \\
(137904, \ 97513, \ 68952) \\
(4684660, \ 3312555, \ 2342330) \\
(159140520, \ 112529341, \ 79570260)\\
\end{array}
$

Let us consider the first one. A putative $(120,85,60)$ is the complement of a $(120,35,10)$-DS. Even though several authors
investigated the possible existence of a DS with these parameters, they have been only able to rule out some groups (see \cite{Becker}).

In view of these consideration and the fact that the admissible values of $v$ grow up very rapidly, we are afraid that Question \ref{Pell} is really hard.

\bigskip
We come now to another question which is probably easier.

The parameter set of the last HPDF constructed in Section \ref{new}
can be written as 
$$
\left({3^4-1\over2},[3^0,3^1,3^2,3^3],{3^4-1\over4}\right).
$$
Inspired by this, we have noticed that 
$$
\left({q^{2n}-1\over q-1},[q^0,q^1,q^2,q^3,\dots,q^{2n-1}],{q^{2n}-1\over q+1}\right)
$$
is an admissible parameter set of a PDF for every positive integer $q$ (not necessarily a prime power!).
Indeed, if we set $v={q^{2n}-1\over q-1}$, $k_{i+1}=q^i$ for $0\leq i\leq 2n-1$, and $\lambda={q^{2n}-1\over q+1}$,
we see that we have:
\begin{itemize}
	\item[1)] 
	$\displaystyle k_1+k_2+\dots+k_{2n}=\sum_{i=0}^{2n-1}q^i={q^{2n}-1\over q-1}=v$;
	
	\medskip
	\item[2)]  
	$\displaystyle k_1(k_1-1)+k_2(k_2-1)+\dots+k_{2n}(k_{2n}-1)=$\\ \\
	$\begin{array}{l}
	= (k_1^2+k_2^2+\dots+k_{2n}^2)-(k_1+k_2+\dots+k_{2n})= \\ \\
	= \displaystyle \sum_{i=0}^{2n-1}q^{2i}-{q^{2n}-1\over q-1}={q^{4n}-1\over q^2-1}-{q^{2n}-1\over q-1}= \\ \\
	= \displaystyle{(q^{2n}-1)(q^{2n-1}-q)\over q^2-1}=\lambda(v-1).\\
	\end{array}
	$
\end{itemize}

\medskip
Thus the following question naturally arises.
\begin{question}
	Given positive integers $q$ and $n$, does there exist a PDF whose $K$ is $[q^0,q^1,q^2,q^3,\dots,q^{2n-1}]$?
\end{question}

For now, we know that the answer is positive for $q=3$ and $n=1,2$. 
A positive answer for $q=3$ and any $n$ would give the first infinite family of HPDFs.

\section*{Acknowledgements}
The author is supported in part by the Croatian Science Foundation under the project 9752.

\end{document}